\newcommand{\bC}{\mathbb{C}}
\newcommand{\bQ}{\mathbb{Q}}
\newcommand{\ol}{\overline}
\newcommand{\vphi}{\varphi}
\newcommand{\la}{\langle}
\newcommand{\ra}{\rangle}
\newcommand{\iz}{\mathrm{Im}(z)}
\newcommand{\bP}{\mathbb{P}}
\newcommand{\cY}{\mathcal{Y}}
\newcommand{\cO}{\mathcal{O}}
\newcommand{\mcH}{\mathcal{H}}
\newcommand{\cX}{\mathcal{X}}
\newcommand{\cF}{\mathcal{F}}
\newtheorem{thm}{Theorem}[section]
\newtheorem{defn}[thm]{Definition}
\newtheorem{rem}[thm]{Remark}
\newtheorem{lem}[thm]{Lemma}
\newtheorem{defn-prop}[thm]{Definition-Proposition}
\begin{document}

\title{Polarized Hodge Structures for Clemens Manifolds}
\author{Chi Li}
\date{}

\maketitle

\abstract{
Let $X$ be a Calabi-Yau threefold. A conifold transition first contracts $X$ along disjoint rational curves with normal bundles of type $(-1,-1)$, and then smooth the resulting singular complex space $\bar{X}$ to a new compact complex manifold $Y$. Such $Y$ is called a Clemens manifold and can be non-K\"{a}hler. 
We prove that any small smoothing $Y$ of $\bar{X}$ satisfies $\partial\bar{\partial}$-lemma. We also show that the resulting pure Hodge structure of weight three on $H^3(Y)$ is polarized by the cup product. 
These results answer some questions of R. Friedman.  }

\tableofcontents

\section{Introduction and main results}

Let $X$ be a projective Calabi-Yau threefold. Let $C_1,\dots, C_r$ be $r$ disjoint smooth rational curves such that their normal bundles $N_{C_i}X\cong \mathcal{O}_{\bP^1}(-1,-1)$. Then they can be contracted to $r$ ordinary double points to give a singular complex space $\bar{X}$. 
If the class $[C_1], \dots, [C_r]$ span $H^4(X;\bC)$ and satisfy a linear relation $\sum_i m_i [C_i]=0$ with all $m_i\neq 0$, then $\bar{X}$ is smoothable and all smoothings of $\bar{X}$ are compact complex manifolds of dimension three with second Betti number $b_2=0$ and trivial canonical bundle. 
Such non-K\"{a}hler complex threefolds were first constructed by H. Clemens and are thus called Clemens manifolds. The details of this construction appeared in R. Friedman \cite{Fri86} under an assumption on the obstruction group of deformation, which was later removed in \cite{Kaw92, Ran92, Tia92}. The transition from $X$ to smoothings of $\bar{X}$ is called the conifold transition, which is a process for connecting different moduli spaces of projective Calabi-Yau threefolds (see \cite{Fri86, Rei87}). For this reason, Clemens manifolds have been studied extensively in complex geometry (see \cite{Yau21}). 

For a given Clemens manifold $Y$, it is a natural question whether $Y$ satisfies the $\partial\bar{\partial}$-lemma.
According to the work \cite{DGMS}, this is equivalent to asking whether the Hodge-de Rham spectral sequence degenerates at the $E_1$-page and 
whether Hodge filtrations on the cohomology groups induce pure Hodge structures. We can further ask whether the possible induced weight 3 Hodge structure on $H^3(Y)$ is polarized by the cup product. As pointed out by Friedman in \cite{Fri20}, Clements manifolds are not deformation equivalent to complex manifolds bimeromorphic to K\"{a}hler manifolds, so that the standard method for endowing (polarized) Hodge structures on cohomology groups do not apply. Despite this fact, Friedman proved in \cite{Fri20} that the general Clemens manifolds satisfy the $\partial\bar{\partial}$-lemma.
Here general roughly means that the $\partial\bar{\partial}$-lemma holds outside of a proper real analytic subvariety in the moduli space. Note that satisfying $\partial\bar{\partial}$-lemma is not a closed condition under deformation (see \cite{Ang14} for concrete examples of nilmanifolds). 
Furthermore, Friedman wrote that 
\vskip 2mm
\noindent
\textit{
``... although it seems likely that in fact it holds for all small smoothings of $\bar{X}$. Unfortunately, the variational methods of this paper do not seem well suited to deciding if the resulting weight three Hodge structures are polarized."}
\vskip 2mm

The goal of this note is to confirm and answer the above speculation:

\begin{thm}\label{thm-ddbar}
Any small smoothing $Y$ of $\bar{X}$ satisfies the $\partial\bar{\partial}$-lemma. Moreover, the resulting weight three Hodge structure on $H^3(Y)$ is polarized by the cup product. 
\end{thm}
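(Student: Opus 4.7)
The plan is to transfer Hodge-theoretic information from the projective (hence K\"{a}hler) small resolution $\sigma:X\to\bar X$ to the given smoothing $Y$ via a careful analysis of the one-parameter smoothing family $\pi:\mathcal{Y}\to\Delta$ with $\mathcal{Y}_0=\bar X$ and $\mathcal{Y}_t=Y_t$. The total space $\mathcal{Y}$ is smooth because of the local smoothing model $\{xy-zw=t\}\subset\bC^5$ at each node, and each node of $\bar X$ is a Du Bois singularity. By the criterion of \cite{DGMS} recalled in the introduction, the $\partial\bar\partial$-lemma for $Y$ reduces to (i) the numerical identities $\sum_{p+q=k}h^{p,q}_{\bar\partial}(Y)=b_k(Y)$ for every $k$ (degeneration of the Hodge--de Rham spectral sequence at $E_1$), and (ii) the Hodge filtration $F^\bullet H^k(Y,\bC)$ being $k$-opposed to its complex conjugate. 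Friedman's theorem gives both (i) and (ii) for \emph{generic} $t\in\Delta^*$, so the task is to propagate them to every small $t$.

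To this end, I would compute $H^{p,q}_{\bar\partial}(Y_t)$ explicitly by combining three ingredients. First, Deligne's mixed Hodge structure on $\bar X$, encoded by the distinguished triangle
\[
\bC_{\bar X}\to R\sigma_*\bC_X\to \bigoplus_{i=1}^r \bC_{p_i}[-2],
\]
whose weight-graded pieces are built from the pure Hodge structure of $X$ together with the contributions of the exceptional curves $[C_i]$. Second, the nearby-cycle exact sequence at each node, whose Milnor fibre $T^*S^3$ contributes a single vanishing cycle $\delta_i\in H^3$, and whose Hodge type on $Y_t$ can be read off from the local conifold geometry. Third, the hypothesis $\sum_i m_i[C_i]=0$ with all $m_i\ne 0$, which controls the kernel and cokernel of the connecting maps and forces $b_2(Y)=0$. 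Combining these produces the Hodge decomposition of $H^\bullet(Y_t)$ with Hodge numbers matching the Betti numbers for every $t$, giving (i); and (ii) follows from upper semicontinuity of $\dim F^p H^k(Y_t)$ together with Friedman's generic opposedness.

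For the polarization, $Q(\alpha,\beta):=\int_Y\alpha\cup\beta$ is nondegenerate and skew on $H^3(Y)$, and the first Hodge--Riemann relation $Q(F^p,F^{4-p})=0$ is immediate from the Hodge decomposition. The positivity $i^{p-q}Q(\alpha,\bar\alpha)>0$ on the one-dimensional $H^{3,0}(Y)=\bC\cdot\Omega_Y$ is the classical $i\int_Y\Omega_Y\wedge\overline{\Omega_Y}>0$ for the Calabi--Yau holomorphic volume form. On $H^{2,1}(Y)$, the plan is to decompose it as the image of a sub-Hodge structure of $H^{2,1}(X)$, on which $Q$ is positive by the Hodge--Riemann relations for the polarized K\"{a}hler manifold $X$, together with a complement spanned by classes dual to the vanishing $3$-cycles; the pairing on the latter is to be controlled through the explicit asymptotics of the periods $\int_{\delta_i}\Omega_{Y_t}$ near the conifold point, as in the Candelas--de la Ossa local model. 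The main obstacle will be this last step: with $b_2(Y)=0$ there is no K\"{a}hler class on $Y$ itself with which to invoke the standard Hodge--Riemann machinery, so positivity on the vanishing-cycle subspace has to be extracted from the sign of a Hessian-type expression in period coordinates, which ultimately reduces to an algebraic positivity statement involving the coefficients $m_i$ of the relation $\sum_i m_i[C_i]=0$.
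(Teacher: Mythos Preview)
Your argument for step (ii) --- that opposedness $F^2_t\cap\ol{F^2_t}=0$ ``follows from upper semicontinuity of $\dim F^p H^k(Y_t)$ together with Friedman's generic opposedness'' --- has the inequality pointing the wrong way. The ranks $\dim F^p_t$ are constant (the Hodge bundles $\cF^p$ are locally free), so what varies is $\dim(F^2_t\cap\ol{F^2_t})$, and this is \emph{upper} semicontinuous: if it vanishes generically it can still jump up on a proper closed subset of the base. Opposedness is thus an open condition, not a closed one, and knowing it outside a real-analytic hypersurface gives nothing at the remaining points. (The paper flags exactly this phenomenon in the introduction, with a reference to nilmanifold examples where the $\partial\bar\partial$-lemma fails in the limit.) This is the heart of the problem, and your proposal does not address it.

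The paper's route here is quite different from yours: it works on the full semi-universal base $S\cong\Delta^{h+1}$ (after semistable reduction), uses Deligne's canonical splitting $F^2_{\lim}=I^{3,0}\oplus I^{2,1}\oplus I^{2,2}$ of the limiting mixed Hodge structure to pick an adapted basis of $\cF^2$, and then computes the determinant $\bigwedge v_\alpha\wedge\ol{v_\alpha}$ directly. The key point is that monodromy forces the leading term of this determinant to be $\mathrm{Im}(z)\cdot|D_{h+1}|^2\cdot(1+O(|s|))$, which is nonvanishing for all small $s$, not merely generically.

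For the polarization your outline is closer in spirit, but the mechanism you propose --- extracting positivity on the vanishing-cycle piece from period asymptotics and an algebraic inequality in the $m_i$ --- is not what the paper does and you correctly flag it as the main obstacle. The paper instead shows that $\{H^{2,1}_s\}$ extends to a smooth subbundle over all of $S$, writes the intersection matrix $Q$ in block form with the distinguished direction $u_{h+1}$ (carrying the $f_{h+1}$-coefficient) split off, and observes that after a Schur-complement elimination the remaining $h\times h$ block is a small perturbation of the restriction of $Q$ to $I^{2,1}$. A short de~Rham argument (Lemma~\ref{lem-transit}) then identifies this with the cup-product form on $H^{2,1}(\tilde X)\cong H^{2,1}(X)$, whence positivity by the classical Hodge--Riemann relations on the projective side. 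No explicit control of the individual $m_i$ enters.
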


Let $Y$ be a smoothing of $\bar{X}$ and let $\{F^p\}$ be the Hodge filtration of $H^3(Y; \bC)$. Then based on the deep theory of the (limiting) mixed Hodge structures, Friedman already showed in \cite{Fri20} that the Hodge-de Rham spectral sequence degenerates at the $E_1$ page, which implies $F^1\cap \ol{F^3}=0$ (\cite[Lemma 1.5]{Fri20}) and $H^{k}(Y; \bC)$ trivially admits a pure Hodge structure if $k\neq 3$. As a consequence, as explained in \cite[Corollary 1.6]{Fri20}, it follows from the work of Deligne-Griffiths-Morgan-Sullivan \cite{DGMS} that $Y$ satisfies the $\partial\bar{\partial}$-lemma if and only if $F^2\cap \ol{F^2}=0$. Recall that a pure Hodge structure on $H^3(Y; \bC)$ is ($\bQ$)-polarized if there is non-degenerate bilinear form (defined over $\bQ$) that satisfies Hodge-Riemann (HR) bilinear relation. 
Friedman showed that, for the resulting pure Hodge structure on the general Clemens manifolds, the first set of HR bilinear relation follows from the degeneration of Hodge-de Rham spectral sequence at the $E_1$-page (\cite[Lemma 1.4]{Fri20}):
\begin{equation}\label{eq-HR1}
\la F^1, F^3\ra=0, \quad \la F^2, F^2\ra=0
\end{equation}
where $\la\cdot, \cdot\ra$ is the standard (alternating) intersection form on $H^3(Y; \bQ)$. 
Moreover the (easy) $H^{3,0}$-part of the second second set of HR bilinear relation is true by a direct calculation:
$
\sqrt{-1}\la \omega, \bar{\omega}\ra>0,  
$
for any $\omega\in H^{3,0}$. 

Given these discussion, the precise statement of our result is the following:
\begin{thm}\label{thm-HR}
Let $Y$ be any small smoothing of $\bar{X}$. 
The subspace $F^2_Y=F^2 H^3(Y; \bC)$ satisfies:
\begin{equation}\label{eq-opposed}
F^2_Y\cap \ol{F^2_Y}=0.
\end{equation}
We have a Hodge decomposition $H^3(Y;\bC)=H^{3,0}\oplus H^{0,3}\oplus H^{2,1}\oplus H^{1,2}$ where $H^{p,q}=F^p_Y\cap \ol{F^q_Y}$ satisfies $H^{q,p}=\ol{H^{p,q}}$. 
Moreover, the following Hodge-Riemann bilinear relation is true: for any $\eta\in H^{2,1}$, 
\begin{equation}\label{eq-HR3}
-\sqrt{-1}\la \eta, \bar{\eta}\ra>0.
\end{equation}
\end{thm}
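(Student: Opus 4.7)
The plan is to transfer the polarized pure Hodge structure from the Kähler Calabi--Yau threefold $X$ to the Clemens manifold $Y$, using a semistable degeneration in which the Kähler side appears as a component of the central fibre.

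First I would embed $Y$ as a generic fibre of the smoothing family $\pi:\mathcal{Y}\to\Delta$ with $\mathcal{Y}_0=\bar X$. The relation $\sum m_i[C_i]=0$ supplies a surface $D\subset\bar X\subset\mathcal{Y}$ passing through the $r$ nodes in the direction of the small resolution; blowing up $D$ in the smooth 4-fold $\mathcal{Y}$ produces a semistable family $\tilde\pi:\tilde{\mathcal{Y}}\to\Delta$ whose central fibre is the normal crossings divisor $X\cup E$ (the Kähler small resolution $X$ together with a ruled exceptional $E$), while the generic fibres are unchanged. This realises the chosen $Y$ as a smoothing of $X\cup E$, so that $X$ is directly accessible.

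Applying Clemens--Schmid to $\tilde\pi$, one obtains a natural morphism of pure polarized Hodge structures $H^3(X)\xleftarrow{\sim}H^3(\bar X)\to H^3(Y)$: the first isomorphism is from Leray applied to the small resolution (the contracted $\bP^1$'s contribute only to $H^2$), and the second is the specialization from $\tilde\pi$. The composed map transports the Kähler polarization on $H^3(X)$ onto a sub-Hodge-structure of $H^3(Y)$, on which $F^2\cap\overline{F^2}=0$ and the positivity \eqref{eq-HR3} are inherited from the Kähler side. The complementary subspace of $H^3(Y)$ has dimension $2k$, where $k$ is the number of independent relations among the $[C_i]$, and is generated by classes associated to the vanishing Lagrangian 3-spheres $L_i$ in $Y$ produced by the smoothing. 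On this complement I would compute the cup product directly from the local geometry near each smoothed node, and verify that it gives a polarized sub-Hodge-structure so that, combined with the image of $H^3(X)$, it yields \eqref{eq-opposed} and \eqref{eq-HR3} on all of $H^3(Y)$.

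The main obstacle I anticipate is the analysis of this vanishing-cycle subspace. Since the $L_i$ are Lagrangian but not holomorphic, the associated classes decompose non-trivially across the Hodge filtration of $H^3(Y)$, and Friedman's first HR relation $\la F^2,F^2\ra=0$ alone does not pin down the signs of the resulting pairings. To handle this, I would use a local Stenzel-type Kähler model on a neighbourhood $T^*S^3$ of each $L_i$, together with the explicit form of the smoothing $\{xy-zw=t\}$, to compute the relevant cup products in closed form; the positivity on the vanishing-cycle subspace should then follow from positivity properties of the local model, compatibly with the transferred Kähler polarization on the rest of $H^3(Y)$.
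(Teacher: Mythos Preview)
Your instinct to transfer the polarization from $H^3(X)$ is exactly right, and it is one of the two pillars of the paper's argument (Lemma \ref{lem-transit} does precisely this for the graded piece $W_3/W_2$). But the proposal, as written, has two genuine gaps that prevent it from going through.

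First, the specialization map does not hand you a polarized sub-Hodge-structure of $H^3(Y)$. What Clemens--Schmid gives is a morphism of \emph{mixed} Hodge structures $H^3(Y_0)\to H^3_{\lim}$ whose image is the weight piece $W_3$, with $W_3/W_2\cong H^3(\tilde X)\cong H^3(X)$. The space $W_3$ is not pure, and the Hodge filtration you must test is $F^\bullet_Y$ for the nearby fibre $Y=Y_s$, which is related to the limiting filtration $F^\bullet_{\lim}$ by $F^\bullet_s=e^{zN}\cdot(F^\bullet_{\lim}+O(s))$. The heart of the matter is that $F^2_{\lim}\cap\ol{F^2_{\lim}}$ may already be nonzero (this is case \textbf{(II)} in the paper): it can equal the one-dimensional $I^{2,2}$. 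In that situation a naive ``small perturbation of something polarized'' argument fails, and what actually rescues opposedness and positivity is the monodromy: the paper shows that the relevant determinant and the $(h{+}1,h{+}1)$-entry of the pairing matrix pick up a leading term $2\,\iz\cdot |D_{h+1}|^2$ coming from $e^{zN}$. This dominates because $\iz\sim -\tfrac{1}{2\pi}\log|\zeta|\to\infty$. Your decomposition does not see this mechanism at all.

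Second, the proposed local Stenzel/conifold computation on the vanishing-cycle complement cannot produce the needed sign. The $(2,1)$-component of a Lagrangian vanishing sphere class is determined by the \emph{global} Hodge filtration $F^2_Y$, not by the local model $T^*S^3$, which carries no intrinsic Hodge structure and no compact cycles against which to test positivity. Concretely, the two ``extra'' dimensions correspond to $W_2$ and $W_4/W_3$, and the way $F^2_Y$ sits relative to these is governed by the nilpotent orbit, not by the local smoothing geometry. In the paper this is exactly where the $\iz$-factor enters: after a block elimination, the $h\times h$ sub-matrix is shown to be a perturbation of the $H^3(X)$-pairing (your idea), while the remaining $1\times 1$ block is positive \emph{because of} the $\iz$ term. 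A purely local calculation near the nodes will recover the topological self-intersection of $L_i$ but not this Hodge-theoretic positivity.

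In short: keep the transfer-from-$X$ idea for the codimension-two piece, but replace the local-model step by an analysis of the limiting mixed Hodge structure via Deligne's $I^{p,q}$-splitting and the action of $e^{zN}$. That is what supplies the missing positivity on the $I^{2,2}$-direction.
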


The proof of Theorem \ref{thm-HR} uses Friedman's calculation of limiting mixed Hodge structure $H^3_{\lim}$ associated to certain semistable model of degeneration (as stated in Theorem \ref{thm-LMH} below). For the proof of \eqref{eq-opposed}, some part of our calculations is motivated by the calculation in \cite{Fri20}. However our use of the limiting mixed Hodge structure is different (see Remark \ref{rem-choice}). The extra tool we use is Deligne's (weak) splitting of mixed Hodge structures (see Theorem \ref{thm-splitting}). 
The key new observation and an interesting phenomenon (based on elementary linear algebra) is that, the invariance of certain well-chosen basis sections of $F^2$ under the monodromy operator implies that their wedge product has a leading term given by $\iz\cdot |D|^2$ where $z$ is in the upper half-plane such that $\iz\gg 1$ when the smoothing is small and $D$ is a smooth function on the base of deformation which is bounded away from zero (see \eqref{eq-leading1}). 
The proof of Hodge-Riemann bilinear relation \eqref{eq-HR3} is more delicate. 
We first observe that $\{H^{2,1}_s=F^2_{Y_s}\cap \ol{F^1_{Y_s}}; s\in S\}$, where $S$ is the base of deformations, form a sub-vector bundle which allows us to extend a well-chosen basis from $H^{2,1}_{\lim}$ and consider intersection pairing on $Y$ as a block matrix. The restriction of intersection pairing to a one-dimensional subspace can be shown to be positive by a similar technique as before. We then reduce the problem to proving the positive definiteness of a sub-matrix on the complement after a suitable change of basis. 
The key is to show that this sub-matrix is essentially a perturbation of the intersection pairing on $X$ (Lemma \ref{lem-transit}), which allows to transmit the Hodge-Riemann relation on $H^3(X)$ to the positive definiteness of this sub-matrix. The method in this paper should also work for some other examples of non-K\"{a}hler manifolds obtained in similar smoothing process, and this will be studied elsewhere. 

\textbf{Acknowledgement:} 
The author is partially supported by NSF (Grant No. DMS-1810867) and an Alfred P. Sloan research fellowship.
The author thanks R. Friedman and G. Tian for their interest. He also thanks G. Tian and S. Rao for helpful comments. 

\section{Data of the limiting mixed Hodge structure}

For the reader's convenience and for comparison to the argument in \cite{Fri20}, we will moslty use similar notation from  loc. cit. In particular we set $h:=\dim H^1(X; \Omega^2_X)=h^{2,1}(X)=h^{1,2}(X)$ so that $\dim H^3(X; \bC)=2h+2$. Recall that we assume that the classes $\{[C_i]; i=1\dots, r\}$ span the $H^4(X;\bC)$ and there exists a linear relation $\sum_i m_i [C_i]=0$ with $m_i\neq 0$. 
According to \cite[Remark 2.3]{Fri20}, we can and will assume that the no $r-1$ elements of $\{[C_i]; i=1,\dots, r\}$ is linearly dependent. In particular we have $b_2(X)=b_4(X)=r-1$. 

Let $\bar{\mu}: X\rightarrow \bar{X}$ be the contraction of all $C_i$'s. 
Set $\mathbb{T}^i_{\bar{X}}=\mathrm{Ext}^1(\Omega^1_{\bar{X}},\cO_{\bar{X}})$. 
By \cite[Theorem 4.4]{Fri86} there is an exact sequence:
\begin{equation*}
0\rightarrow H^1(\bar{X}, T^0_{\bar{X}})\rightarrow \mathbb{T}^1_{\bar{X}}\rightarrow \bC\rightarrow 0
\end{equation*}
where the last term $\bC$ is identified with the kernel of the fundamental class map
$
\bigoplus_i \bC[C_i]\rightarrow H^4(X_0; \bC). 
$
Let $\bar{\pi}: \cX\rightarrow \bar{S}$ be the germ of the locally semi-universal deformation of $\bar{X}$. We can identify $\bar{S}$ with the germ around the origin in $\mathbb{T}^1_{\bar{X}}$. Note that in the current setting, we have the isomorphism $H^1(\bar{X}; T^0_{\bar{X}})\cong H^1(\bar{X}; R^0\bar{\mu}_*\Theta_X)=H^1(X; \Theta_X)\cong H^1(X; \Omega_X^2)$ (see \cite[Lemma 8.6]{Fri91}). In particular, $\dim \bar{S}=h+1$. The fiber $X_{\bar{s}}$ over $\bar{s}\in \bar{S}\setminus H^1(\bar{X}; T^0_{\bar{X}})$ is a smoothing of $\bar{X}$. Topologically it is obtained from $X$ by doing surgery which replaces $r$ copies of $S^2\times D^3$ (around $C_i$) by $r$ copies of $S^3\times D^3$. A calculation based on Mayer-Vietoris sequence shows that $b_3(X_{\bar{s}})=2h+4$, $b_2(X_{\bar{s}})=b_4(X_{\bar{s}})=0$ (see \cite[Theorem 3.3]{Ros06}). In particular we know that $X_{\bar{s}}$ is non-K\"{a}hler.  It is known that the complex manifold $X_{\bar{s}}$ is diffeomorphic to a connected sum of $h+2$ copies of $S^3\times S^3$ and any $h\ge 0$ can be attained (\cite{Fri91} and \cite{LT93}). 

There is a normal crossing model for the smoothing of $\bar{X}$ constructed by Friedman \cite{Fri20} as follows. 
 Let $S\rightarrow \bar{S}$ be the double cover of $\bar{S}$ branched along the smooth hypersurface $\bar{S}\cap H^1(\bar{X}, T^0_{\bar{X}})$ and let $\bar{\cY}\rightarrow S$ be the pull back family. Let $D$ be the inverse image of $H^1(\bar{X}, T^0_{\bar{X}})$. The fibres of $\bar{\cY}$ over $D$ have $r$ ordinary double points and the singularities of the of $\bar{\cY}$ are locally analytically isomorphic to products of ordinary double points with $D$. Blowing up these singular points gives a proper flat morphism $\pi: \cY\rightarrow S$, where $\cY$ is smooth and the discriminant locus of $\pi$ is $D$. 
 
 The central fibre $Y_0$ can be described as follows. 
First let $\mu: \tilde{X}\rightarrow X$ be the blowup of $X$ along $C_i, i=1,\dots, r$ with the exceptional divisor $\sqcup_i E_i$. 
Since $N_{C_i}X\cong \cO_{\bP^1}(-1,-1)$, we know that $E_i=\bP(N_{C_i}X)\cong  \bP^1\times\bP^1$. Moreover the natural map $H^3(X)\rightarrow H^3(\tilde{X})$ becomes an isomorphism of Hodge structures. 
Let $Q_i$ be the smooth quadric hypersurface in $\bP^4$ and $E_i$ be a hyperplane section of $Q_i$. Then $Y_0=\tilde{X}\sqcup \coprod_i Q_i/\sim$ where the equivalence relation $\sim$ means that we glue $E_i\subset Q_i$ to $E_i\subset \tilde{X}$. 

The fibers of $\pi$ over $D$ are locally trivial deformations of $Y_0$. Let $\cY_D=\pi^{-1}(D)$. Then $\cY_D$ is a divisor with normal crossings in $\cY$. For $s\not\in D$, the fiber $Y_s$ of $\pi$ is identified with the corresponding smooth fibre $X_{\bar{s}}$ of $\bar{\pi}$ where $\bar{s}\in \bar{S}$ is the point lying under $s$. 

After shrinking, we can assume that $S$ is a polydisk $\Delta^{h+1}$.
Set $S^*=S\setminus D\cong \Delta^h\times\Delta^*$.  Let $\vphi: \widetilde{S^*}\cong \Delta^h\times \mathfrak{H}\rightarrow S^*$ be the universal covering of $S^*$, where $\mathfrak{H}=\{z\in \bC; \iz>0\}$ is the upper half plane. 
Set $\cY_\infty=\cY^*\times_{S^*}\widetilde{S^*}$. Then $H^3(\cY_\infty;\bC)\cong H^3(Y_s; \bC)$ for any $s\not\in D$.  
As explained in \cite{Fri20}, the work of Steenbrink (\cite{Ste75}), which gives a geometric realization of the Hodge theoretical construction in \cite{Sch73}, can be adapted to give a limiting mixed Hodge structure as stated in the next theorem. 
We refer to the book \cite{PS08} for background, definitions and deep results about (limiting) mixed Hodge structures. 
\begin{thm}[\cite{Fri20}]\label{thm-E1deg}
\begin{enumerate}[(i)]
\item
The hypercohomology $H:=\mathbb{H}^3(Y_0; \Omega^{\bullet}_{\cY/S}(\log \cY_D)|_{Y_0})$ is isomorphic to the cohomology $H^3(\cY_\infty; \bC)\cong H^3(Y_s; \bC)$. The sheaf $\mcH^3=\mathbb{R}^3\pi_*\Omega^\bullet_{\cY/S}(\log\cY_D)$ is locally free. $\mcH$ is Deligne's canonical extension of $\mcH^3|_{S^*}$. 
\item There is a mixed Hodge structure $(H_{\lim}, F_{\lim}^{\bullet}, W^{\lim}_{\bullet})$ on $H$. The spectral sequence with the $E_1$ page
\begin{equation*}
E^{p,q}_1=H^q(Y_0; \Omega^\bullet_{\cY/S}(\log \cY_D)|_{Y_0}) \Longrightarrow H^{p+q}_{\lim}
\end{equation*}
degenerates at $E_1$ and the corresponding filtration on $H^{p+q}_{\lim}$ is the Hodge filtration. 
\item Possibly after shrinking $S$, the spectral sequence of coherent sheaves on $S$ whose $E_1$-page is:
\begin{equation*}
E^{p,q}_1=R^q \pi_*\Omega^p_{\cY/S}(\log \cY_D) \Longrightarrow \mathbb{R}^{p+q}\pi_*\Omega^{\bullet}_{\cY/S}(\log \cY_D)
=\mcH^{p+q}
\end{equation*}
degenerates at $E_1$. 
For $s\in S^*$, the Hodge-de Rham spectral sequence for $Y_s$ degenerates at the $E_1$-page. 
The sheaves $R^q\pi_*\Omega^p_{\cY/S}(\log \cY_D)$ are locally free. 
Moreover, there is a filtration of $\mcH^{3}$ by holomorphic subbundles $\cF^\bullet$. 

\end{enumerate}
\end{thm}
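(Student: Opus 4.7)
The plan is to adapt Steenbrink's classical construction \cite{Ste75} of the limiting mixed Hodge structure for a semistable degeneration to the present setup, in which the base $S$ is $(h+1)$-dimensional but $\pi:\cY\to S$ is built (via double cover and blowup) so as to behave as a semistable degeneration transverse to the smooth discriminant $D$, with $h$ locally trivial directions along $D$. All geometric prerequisites for Steenbrink's framework are in place: $Y_0$ is a simple normal crossings variety whose components $\tilde{X}$ and the smooth quadric threefolds $Q_i$ are projective (hence K\"{a}hler), the intersection divisors $E_i$ are smooth K\"{a}hler surfaces, $\cY_D\subset\cY$ is a simple normal crossings divisor with product-like local structure along $D$, and the monodromy around $D$ is unipotent as a consequence of the preceding semistable reduction.

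For part (i), I would fix local coordinates $(s_1,\dots,s_h,t)$ on $S$ with $D=\{t=0\}$ and use the transverse semistability to establish Steenbrink's fundamental quasi-isomorphism
\[
\Omega^\bullet_{\cY/S}(\log\cY_D)\big|_{Y_0}\;\simeq\;R\psi_\pi\bC
\]
between the restriction of the relative log de Rham complex and the nearby cycles complex. Taking hypercohomology gives $H\cong H^3(\cY_\infty;\bC)\cong H^3(Y_s;\bC)$. That $\mcH^3$ is Deligne's canonical extension of its restriction to $S^*$ then follows from the standard computation that the Gauss-Manin connection on $R^3\pi_*\Omega^\bullet_{\cY/S}(\log\cY_D)$ has logarithmic poles along $D$ with nilpotent residue, combined with Deligne's uniqueness characterization (eigenvalues of the residue lying in $[0,1)$).

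For part (ii), the Hodge filtration $F^\bullet_{\lim}$ on $H$ is induced by the stupid filtration $\Omega^{\geq p}_{\cY/S}(\log\cY_D)|_{Y_0}$, while the weight filtration $W^{\lim}_\bullet$ is constructed via the monodromy-weight Koszul formalism on Steenbrink's double complex. The $E_1$-degeneration of the Hodge-to-de Rham spectral sequence on $Y_0$ is the crucial ingredient: following Steenbrink, one compares it with the weight spectral sequence, whose $E_2$-terms consist of classical pure polarized Hodge structures on $H^*(\tilde{X})$, $H^*(Q_i)$, and $H^*(E_i)$. Purity on the $E_2$-page of the weight spectral sequence then forces the Hodge-to-de Rham spectral sequence on $Y_0$ to degenerate at $E_1$ via a standard strictness argument, and the resulting filtrations $(F^\bullet_{\lim},W^{\lim}_\bullet)$ endow $H$ with the desired mixed Hodge structure.

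For part (iii), once $E_1$-degeneration at the central fiber and constancy of the Hodge numbers are established, Grauert's theorem together with cohomology and base change (after possibly shrinking $S$) give local freeness of $R^q\pi_*\Omega^p_{\cY/S}(\log\cY_D)$ and $E_1$-degeneration of the sheaf-level spectral sequence on all of $S$. For $s\in S^*$ the fibrewise restriction is the ordinary Hodge-to-de Rham spectral sequence on the smooth fiber $Y_s$, which therefore also degenerates at $E_1$; the holomorphic subbundle filtration $\cF^\bullet\subset\mcH^3$ is obtained as the images of $R^q\pi_*\Omega^{\geq p}_{\cY/S}(\log\cY_D)$. The main obstacle throughout is to verify that the higher-dimensional base does not disturb Steenbrink's original single-parameter arguments; this should reduce essentially to a Künneth/base-change verification based on the product-like local structure of $\cY_D$ along the semistable direction.
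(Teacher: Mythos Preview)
Your proposal is correct and aligns with what the paper indicates: the theorem is cited from \cite{Fri20} rather than proved in the paper, and the preceding paragraph explicitly says that Steenbrink's construction \cite{Ste75} ``can be adapted'' to the present higher-dimensional base, which is precisely the adaptation you outline. Since the paper offers no independent proof beyond this citation, your sketch of the Steenbrink machinery (nearby cycles quasi-isomorphism, monodromy-weight filtration via the double complex, $E_1$-degeneration from purity on the weight spectral sequence, and Grauert/base-change for local freeness) is essentially the argument being invoked.
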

Note that in the first statement, the restriction $\mcH|_{S^*}$ is a vector bundle associated to the local system and is endowed with the flat Gauss-Manin connection. Deligne's canonical extension $\mcH$ is trivialized by using the space of flat multi-section of $\mcH|_{S^*}$. Then pullback $\widetilde{\mcH}:=\vphi^*\mcH$ is a trivial holomorphic vector bundle on $\widetilde{S^*}$. The monodromy operator $T$ acts on $\widetilde{\mcH}$ and the fibre of $\mcH$ is the quotient of $\widetilde{\mcH}$ under the monodromy action. Set $N=\log T$ to be the associated nilpotent operator.
Any element $v\in H$ determines a holomorphic section $e^{zN}v$ of $\vphi^*\mcH$ which is also considered as a flat multi-section of $\mcH$. If $\sigma:=\{\sigma(s); s\in S^*\}$ is a smooth section of $\mcH|_{S^*}$, then 
$\tilde{\sigma}(t,z):=(\vphi^*\sigma)(t, z)=e^{zN} \sigma(t, \zeta)$ is a smooth $H$-valued function that satisfies the invariance property: $\tilde{\sigma}(z+1)=T\tilde{\sigma}(z)$ for any $z$ in the upper half plane. 

The holomorphic subbundle in Theorem \ref{thm-E1deg}.(iii) can be understood as follows. 
By Griffith's theory of the variational of Hodge structures, for any $p$, $\{ F^p H^3(Y_s; \bC); s\in S^*\}$ form a holomorphic vector bundle that will be denoted by $\cF^p|_{S^*}$.
Let $\check{D}$ be the compact subvariety of the product of Grassmannian varieties that parametrizes flags of ranks $(2h+3, h+2,1)$ in $H^3\cong \bC^{2h+4}$ that satisfies the first Hodge-Riemann bilinear relation \eqref{eq-HR1}. 
The period map $S\rightarrow \check{D}$ lifts to a holomorphic map $\tilde{g}^*: \widetilde{S^*}\rightarrow \check{D}$. 
Then $e^{-zN}\tilde{g}^*(z)$ is invariant under the monodromy action and induces the holomorphic mapping $G^*: S^*\rightarrow \check{D}$. 
The holomorphic subbundles $\cF^\bullet$ over $S$ extend $\cF^\bullet|_{S^*}$, and correspond to an extension of $G: S\rightarrow \check{D}$ of $G^*$ such that $\cF^\bullet$ are the pull back of universal bundles on the corresponding Grassmannian (see \cite{GrSc75}). 
According to this description, if we have a trivialization of $\cF^p$ (with rank $d_p$) over $S$ by a smooth frame $\{\sigma_1,\dots, \sigma_{d_p}\}$, $\sigma_i$ is thought as a linear combination of flat multi-sections, and for any $\tilde{s}=(t,z)\in \widetilde{S^*}=\Delta^h\times \mathfrak{H}$, the filtration $F^p_{\tilde{g}(\tilde{s})}$ of $\tilde{g}(\tilde{s})\in \check{D}$ is spanned by $\{(\vphi^*\sigma_1)(\tilde{s}),\dots, (\vphi^*\sigma_{d_p})(\tilde{s})\}$.

On the other hand, by the celebrated work of Deligne \cite{Del71}, there is a (functorial) mixed Hodge structure on $H^3(Y_0)$ (see \cite[Section 4]{GrSc75} for an easy construction). Moreover the specialization map $H^3(Y_0)\rightarrow H^3(\cY_\infty)\cong H^3(Y_s)$ with $s\in S^*$, which is induced by the composition
$Y_s\rightarrow \cY\rightarrow Y_0$, becomes a morphism of mixed Hodge structure (see \cite[Theorem 11.29]{PS08}). 

In the following theorem we collect the important data of the limiting mixed Hodge structure, which was calculated by Friedman. 
\begin{thm}[Friedman, \cite{Fri20}]\label{thm-LMH}
\begin{enumerate}[(i)]
\item 
As mixed Hodge structure over $\bQ$, $H^3(Y_0)$ is an extension of the pure Hodge structure on $H^3(\tilde{X})$ by a pure weight two piece $\cong \bQ(-1)$:
\begin{equation}
0\rightarrow \bQ(-1) \rightarrow H^3(Y_0)\rightarrow H^3(\tilde{X})\rightarrow 0.
\end{equation}
\item There is an exact sequence of mixed Hodge structures:
\begin{equation*}
0\rightarrow H^3(Y_0)\rightarrow H^3_{\lim}\rightarrow \bQ(-2)\rightarrow 0. 
\end{equation*}
The weight filtration $W_\bullet=W^{\lim}_\bullet$ on $H^3_{\lim}$ is given by:
\begin{equation*}
0\subseteq W_2\subseteq W_3\subset W_4=H^3_{\lim}
\end{equation*}
where $W_2\cong \bQ(-1)$, $W_3=H^3(Y_0)$, $W_3/W_2\cong H^3(\tilde{X})\cong H^3(X)$ and $W_4/W_3\cong \bQ(-2)$.  
\item
The Hodge filtration $F^\bullet:=F^\bullet_{\lim}$ satisfies:
\begin{enumerate}
\item $\dim F^1=2h+3$, $F^1\cap W_2=W_2$. 
\item $\dim F^2=h+2$, $F^2\cap W_2=0$ and $F^2+W_3=W_4$. 
\item $F^3\subseteq W_3$. 
\end{enumerate}
\item
The nilpotent operator $N: H^3_{\lim}\rightarrow H^3_{\lim}$ satisfies $N^2=0$ and:
\begin{equation*}
\mathrm{Ker}(N)=W_3, \quad \mathrm{Im}(N)=W_2. 
\end{equation*}
The induced map $N: W_3/W_2\rightarrow W_2$ is an isomorphism. 
\item 
 $W_3=W_2^\perp$ with respect to the intersection form on $H^3_{\lim}\cong H^3(\cY_\infty)=H^3(Y_s)$ with $s\in S^*$.

\end{enumerate}
\end{thm}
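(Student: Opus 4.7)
The strategy is to bootstrap the LMHS on $H^3_{\lim}$ from Deligne's MHS on the singular central fiber $Y_0$ via Clemens--Schmid, and then read the five statements off the resulting bigraded structure. For (i), I would compute the MHS on $H^3(Y_0)$ directly from the Mayer--Vietoris spectral sequence associated to the normalization $\tilde X \sqcup \coprod_i Q_i \twoheadrightarrow Y_0$ with double locus $\coprod_i E_i$. Since the smooth three-dimensional quadric $Q_i \subset \bP^4$ and the surface $E_i \cong \bP^1 \times \bP^1$ both have vanishing odd cohomology, the sequence collapses to the short exact MHS extension
\begin{equation*}
0 \to \operatorname{coker}\!\Bigl( H^2(\tilde X) \oplus \bigoplus_i H^2(Q_i) \xrightarrow{\rho} \bigoplus_i H^2(E_i) \Bigr) \to H^3(Y_0) \to H^3(\tilde X) \to 0,
\end{equation*}
whose cokernel is pure of weight $2$ and, being a quotient of a type-$(1,1)$ space, of Tate type. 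The key is that $\operatorname{coker}\rho$ is one-dimensional: writing $H^2(\tilde X) = H^2(X) \oplus \bigoplus \bQ[E_i]$ and using the explicit restrictions $[E_i]|_{E_i} = -(1,1)$, $H_{Q_i}|_{E_i} = (1,1)$, and $c_X|_{E_i} = (c_X \cdot [C_i])\,f$ for $c_X \in H^2(X)$, a rank calculation reduces via Poincar\'e duality on $X$ to the kernel of the fundamental class map $\bigoplus_i \bQ[C_i] \to H^4(X;\bQ)$, which is one-dimensional by hypothesis. Hence the cokernel is $\bQ(-1)$.

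For (ii), (iv) and (v), I would apply the Clemens--Schmid exact sequence of MHS for $\pi$ along a disk transverse to $D$,
\begin{equation*}
\cdots \to H^3(Y_0) \xrightarrow{\operatorname{sp}} H^3_{\lim} \xrightarrow{N} H^3_{\lim}(-1) \to \cdots,
\end{equation*}
combined with $N^2 = 0$, which holds because $Y_0$ has only two smooth components meeting transversely. A dimension count ($\dim H^3(Y_0) = 2h+3$ by (i) and $\dim H^3_{\lim} = b_3(Y_s) = 2h+4$) forces $\operatorname{sp}$ to be injective with one-dimensional cokernel of pure weight $4$ and Tate type, hence $\bQ(-2)$, giving (ii). For $N$ with $N^2=0$, the monodromy weight filtration centered at $3$ is rigid and reads $W_2 = \operatorname{Im}(N)$, $W_3 = \operatorname{Ker}(N)$, $W_4 = H^3_{\lim}$, with $N$ inducing an isomorphism $W_3/W_2 \xrightarrow{\sim} W_2(-1)$; combined with (i) this identifies $W_3 = H^3(Y_0)$ and $W_3/W_2 = H^3(X)$, establishing (iv). For (v), the polarization of the LMHS by the cup product satisfies $\langle W_i, W_j\rangle = 0$ whenever $i+j < 2\cdot 3 = 6$, so $W_2 \perp W_3$; the dimension equality $\dim W_2 + \dim W_3 = 1 + (2h+3) = b_3$ then forces $W_3 = W_2^\perp$.

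For (iii), the graded pieces $W_2 \cong \bQ(-1)$, $W_3/W_2 \cong H^3(X)$ with Calabi--Yau Hodge numbers $(1,h,h,1)$, and $W_4/W_3 \cong \bQ(-2)$ determine Deligne's bigrading $I^{p,q}$ on $H^3_{\lim}$ completely: $I^{1,1}$, $I^{2,2}$, $I^{3,0}$, $I^{0,3}$ are each one-dimensional and $I^{2,1}$, $I^{1,2}$ have dimension $h$. Summing along columns gives $\dim F^3 = 1$, $\dim F^2 = h+2$, $\dim F^1 = 2h+3$, and the stated relations $F^1 \cap W_2 = W_2$ (since $(1,1) \in F^1$), $F^2 \cap W_2 = 0$ (since $(1,1) \notin F^2$), $F^2 + W_3 = W_4$ (since $I^{2,2}$ surjects onto $W_4/W_3$), and $F^3 \subseteq W_3$ (since $I^{3,0}$ has weight $3$) are immediate. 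The main obstacle throughout is the cokernel rank count in step (i), where the spanning and single-relation hypothesis on $\{[C_i]\}$ enters essentially via the explicit restriction maps between the various $H^2$'s; once this is established, all remaining statements follow from standard Clemens--Schmid and polarized-LMHS machinery.
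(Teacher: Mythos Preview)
The paper does not give its own proof of this theorem; it is stated as a summary of results from \cite{Fri20} and used as input. So there is no in-paper argument to compare against, and your outline is essentially a reconstruction of what Friedman does: Mayer--Vietoris on the normal-crossing fibre for (i), Clemens--Schmid for (ii), (iv), (v), and then reading (iii) off the bigrading of the graded pieces. That strategy is correct.

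One genuine gap: your assertion that ``a dimension count forces $\operatorname{sp}$ to be injective'' is not justified as written. Knowing only $\dim H^3(Y_0)=2h+3$, $\dim H^3_{\lim}=2h+4$, $N^2=0$, and the local invariant cycle identity $\operatorname{Im}(\operatorname{sp})=\ker N$ gives $\operatorname{rank} N\ge 1$, but does not by itself exclude $\operatorname{rank} N\ge 2$ with $\operatorname{sp}$ having a kernel. You must control the term to the left of $H^3(Y_0)$ in Clemens--Schmid, namely show $H_5(Y_0)=0$. This follows from the same Mayer--Vietoris: $H^5(Q_i)=H^5(\tilde X)=0$ (the latter since $b_1(X)=0$) and the restriction $H^4(Q_i)\to H^4(E_i)$ is an isomorphism, so $H^5(Y_0)=0$. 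Once $\operatorname{sp}$ is injective, $\dim\ker N=2h+3$ and $\operatorname{rank} N=1$ follow, and the rest of your argument goes through.

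One slip (which mirrors a typo in the stated theorem): the isomorphism induced by $N$ is $\operatorname{Gr}^W_4=W_4/W_3\xrightarrow{\sim}W_2$, not $W_3/W_2\to W_2$; indeed $W_3=\ker N$, so $N$ is zero on $W_3/W_2$, and the dimensions $2h+2$ versus $1$ do not match.
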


To continue, we will use the following splitting result by Deligne:
\begin{thm}[{see \cite[Lemma-Definition 3.4]{PS08}}]\label{thm-splitting}
For any mixed Hodge structure $(H, F^\bullet, W_\bullet)$, the bigrading of $H\otimes\bC$ given by the subspaces:
\begin{equation*}
I^{p,q}=F^p\cap W_{p+q}\cap \left(\ol{F^q}\cap W_{p+q}+\sum_{j\ge 2} \ol{F^{q-j+1}}\cap W_{p+q-j}\right)
\end{equation*}
satisfy the following condition:
\begin{equation}\label{eq-direct}
W_k=\bigoplus_{p+q\le k}I^{p,q}, \quad F^r=\bigoplus_{p\ge r}I^{p,q}. 
\end{equation}
\end{thm}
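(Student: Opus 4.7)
The plan is to prove Deligne's splitting by induction on the length of the weight filtration, using as the key input the canonical pure Hodge structure of weight $k$ on each graded piece $\mathrm{Gr}^W_k H = W_k/W_{k-1}$ with its Hodge decomposition $\bigoplus_{p+q = k} H^{p,q}(\mathrm{Gr}^W_k H)$. The central technical claim I would establish is that the projection $\pi_k \colon W_k \to \mathrm{Gr}^W_k H$ restricted to $I^{p,q}$ (for $p+q=k$) is an isomorphism onto $H^{p,q}(\mathrm{Gr}^W_k H)$. Once this is established, both direct sum decompositions $W_k = \bigoplus_{p+q \le k} I^{p,q}$ and $F^r = \bigoplus_{p \ge r} I^{p,q}$ follow by induction on $k$ and dimension counting against the pure Hodge structures on the graded quotients.

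For surjectivity of $\pi_k|_{I^{p,q}}$, given a class $[v] \in H^{p,q}(\mathrm{Gr}^W_{p+q})$ I would construct a lift by an iterative correction procedure. Start with an initial lift $v_0 \in F^p \cap W_{p+q}$ of $[v]$, which exists because $F^\bullet$ is compatible with $W_\bullet$. Since $[v] \in \ol{F^q}\mathrm{Gr}^W_{p+q}$, one can write $v_0 \equiv u_0 \bmod W_{p+q-1}$ for some $u_0 \in \ol{F^q} \cap W_{p+q}$, and the residue $v_0 - u_0$ lies in $W_{p+q-1}$. Using the Hodge decomposition on $\mathrm{Gr}^W_{p+q-1}$ together with the requirement $v_0 \in F^p$, the obstruction on this graded piece can be absorbed by a correction in $\ol{F^{q-1}} \cap W_{p+q-1}$. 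Iterating on the successive graded pieces $\mathrm{Gr}^W_{p+q-j}$ produces corrections in $\ol{F^{q-j+1}} \cap W_{p+q-j}$, which together with $u_0$ assemble precisely into the expression defining $I^{p,q}$. The finiteness of the weight filtration ensures termination.

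Injectivity of $\pi_k|_{I^{p,q}}$ is established by a descending argument: if a nonzero $v \in I^{p,q}$ lay in $W_{p+q-1}$, then expanding $v$ via its defining expression and examining its image in each successive lower graded piece would force all components to vanish by the pure Hodge decomposition on those pieces, giving $v=0$. The direct sum identity $W_k = \bigoplus_{p+q \le k} I^{p,q}$ then follows by decomposing any $v \in W_k$ via its image in $\mathrm{Gr}^W_k$ (handled by surjectivity) and invoking the inductive hypothesis on the residual element in $W_{k-1}$. Similarly, $F^r = \bigoplus_{p \ge r} I^{p,q}$ holds because every $I^{p,q}$ with $p \ge r$ sits inside $F^r$ by construction, and the opposite inclusion is proved by induction on the weight using the compatibility of $F^\bullet$ with $W_\bullet$ on each graded piece.

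The main obstacle is ensuring that the iterative correction procedure telescopes cleanly into the closed-form expression for $I^{p,q}$, rather than producing an ambiguous collection of corrections depending on choices. The correct accounting requires that the correction absorbing the obstruction on $\mathrm{Gr}^W_{p+q-j}$ be placed inside $\ol{F^{q-j+1}} \cap W_{p+q-j}$ rather than the larger space $\ol{F^q} \cap W_{p+q-j}$; the alignment of the antiholomorphic index $q-j+1$ with the weight offset $j$ is exactly what is encoded in Deligne's formula, and verifying that this choice is always achievable and unique modulo lower terms constitutes the technical heart of the argument.
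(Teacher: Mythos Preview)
The paper does not prove this theorem at all; it is quoted as a known result of Deligne with a pointer to \cite[Lemma-Definition~3.4]{PS08} and then used as a black box. Your outline is essentially the standard argument one finds in that reference (or in Cattani--Kaplan--Schmid): show that the projection $I^{p,q}\to H^{p,q}(\mathrm{Gr}^W_{p+q})$ is an isomorphism via an iterative correction down the weight filtration, and then deduce both direct-sum identities by induction on the length of $W_\bullet$. So there is nothing in the paper to compare against, but your approach matches the classical one.

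One small indexing slip worth flagging: you say the obstruction on $\mathrm{Gr}^W_{p+q-1}$ is absorbed by a correction in $\ol{F^{q-1}}\cap W_{p+q-1}$, but no such term appears in Deligne's formula---the sum starts at $j=2$, i.e.\ at weight $p+q-2$. What actually happens at weight $p+q-1$ is that, on a pure structure of that weight, $F^p$ and $\ol{F^q}$ together span everything (since $p+q>p+q-1$), so the residue $[v_0-u_0]$ can be killed entirely by adjusting $v_0$ within $F^p\cap W_{p+q-1}$ and $u_0$ within $\ol{F^q}\cap W_{p+q-1}$, pushing the difference down to $W_{p+q-2}$ with no new correction term. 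The first genuinely new term $\ol{F^{q-1}}\cap W_{p+q-2}$ enters only at the next step, where the single leftover Hodge component $H^{p-1,q-1}(\mathrm{Gr}^W_{p+q-2})$ cannot be reached by either $F^p$ or $\ol{F^q}$. This is exactly the ``alignment of $q-j+1$ with the weight offset $j$'' you correctly identify as the crux; your description just starts the pattern one step too early.
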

Applying this splitting result to the data from Theorem \ref{thm-LMH}, we get:
\begin{lem}
With the notation from above, we have the following identities:
\begin{eqnarray*}
&&I^{1,1}=W_2=\ol{I^{1,1}}; \\
&&I^{1,2}=F^1\cap \ol{F^2}\cap W_3, \quad I^{2,1}=F^2\cap \ol{F^1}\cap W_3=\ol{I^{1,2}}; \\
&&I^{3,0}=F^3, \quad I^{0,3}=\ol{F^3}=\ol{I^{3,0}}\\
&&I^{2,2}=F^2\cap (\ol{F^2}+W_2).  
\end{eqnarray*}
In particular, $\dim_\bC I^{1,1}=1=\dim_\bC I^{2,2}$, $\dim_\bC I^{1,2}=h$, $\dim_\bC I^{3,0}=1$ and
$I^{p,q}=0$ otherwise. 
As a consequence, we have the following decompositions for the Hodge filtration:
\begin{equation}\label{eq-HFsplitting}
F^1=I^{1,1}\oplus I^{2,1}\oplus I^{1,2} \oplus I^{2,2}\oplus I^{3,0}, \quad F^2=I^{2,1}\oplus I^{2,2} \oplus I^{3,0}, \quad F^3=I^{3,0}
\end{equation}
and for the weight filtration:
\begin{equation}\label{eq-WFsplitting}
W_2=I^{1,1}, \quad W_3=W_2\oplus I^{1,2}\oplus I^{2,1}\oplus I^{3,0}\oplus I^{0,3}, \quad W_4=W_3\oplus I^{2,2}.  
\end{equation}
\end{lem}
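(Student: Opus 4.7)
The plan is to apply Deligne's splitting formula from Theorem \ref{thm-splitting} term by term, using the data of the weight filtration and Hodge filtration listed in Theorem \ref{thm-LMH}. The key observation that makes this essentially mechanical is that $W_0=W_1=0$ since the weight filtration starts at $W_2$. Consequently, in the correction sum $\sum_{j\ge 2}\ol{F^{q-j+1}}\cap W_{p+q-j}$, most terms drop out: for $(p,q)$ with $p+q=3$, the $j=2$ term involves $W_1=0$; for $(p,q)=(1,1)$ the $j\ge 2$ terms involve $W_{-j}=0$; and only for $(p,q)=(2,2)$ does one surviving correction term appear, namely $\ol{F^1}\cap W_2$, which equals $W_2$ because part (iii)(a) of Theorem \ref{thm-LMH} gives $W_2\subseteq F^1$, hence $W_2=\overline{W_2}\subseteq\ol{F^1}$.

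With the correction terms under control, each formula for $I^{p,q}$ simplifies directly. For $I^{1,1}$ one gets $F^1\cap W_2=W_2$ by (iii)(a), and it is automatically conjugation-invariant because $W_2$ is defined over $\bQ$. For $I^{3,0}$ one uses $F^3\subseteq W_3$ from (iii)(c), and similarly for $I^{0,3}=\ol{F^3}$ one uses that $\ol{F^3}\subseteq W_3$. For $I^{1,2}$ and $I^{2,1}$ one reads off the formulas directly using $W_1=0$, with the fact $W_3=\overline{W_3}$ allowing us to drop redundant $W_3$ factors. Finally, for $I^{2,2}$ the formula becomes $F^2\cap W_4\cap(\ol{F^2}\cap W_4+W_2)=F^2\cap(\ol{F^2}+W_2)$ since $W_4=H^3_{\lim}$.

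To extract dimensions, the plan is to pass to the graded pieces $\mathrm{Gr}^W_k$. By Deligne's splitting, each $I^{p,q}$ projects isomorphically onto the $(p,q)$-Hodge piece of $\mathrm{Gr}^W_{p+q}$. Using Theorem \ref{thm-LMH}(ii), the graded pieces are $\mathrm{Gr}^W_2\cong\bQ(-1)$ (pure of type $(1,1)$, giving $\dim I^{1,1}=1$), $\mathrm{Gr}^W_3\cong H^3(X)$ (pure of weight $3$ with Hodge numbers $h^{3,0}=h^{0,3}=1$ and $h^{2,1}=h^{1,2}=h$, giving $\dim I^{3,0}=\dim I^{0,3}=1$ and $\dim I^{2,1}=\dim I^{1,2}=h$), and $\mathrm{Gr}^W_4\cong\bQ(-2)$ (pure of type $(2,2)$, giving $\dim I^{2,2}=1$). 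All other $I^{p,q}$ vanish because the Hodge numbers of these three pure pieces account for them. The decompositions (\ref{eq-HFsplitting}) and (\ref{eq-WFsplitting}) of $F^\bullet$ and $W_\bullet$ are then immediate specializations of the general identities (\ref{eq-direct}) in Deligne's theorem. The only mild subtlety, and perhaps the main point requiring care, is the verification that $W_2\subseteq\ol{F^1}$ used for $I^{2,2}$ — but this is just conjugation of (iii)(a) combined with the rationality of $W_2$.
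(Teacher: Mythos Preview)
Your proposal is correct and follows exactly the approach of the paper: apply Deligne's formula from Theorem \ref{thm-splitting} term by term, using $W_0=W_1=0$ to kill most correction terms and using Theorem \ref{thm-LMH}(iii)(a) to simplify $\ol{F^1}\cap W_2=W_2$ in the computation of $I^{2,2}$. In fact you give more detail than the paper, which only writes out the cases $I^{1,1}$ and $I^{2,2}$ explicitly and leaves the rest (including the dimension count via the graded pieces) to the reader.
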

\begin{proof}
Since the calculation is quite straightforward, we only show the identity for $I^{1,1}$ and $I^{2,2}$ and leave the verification of other identities to the reader:
\begin{eqnarray*}
I^{1,1}&=&F^1\cap W_2 \cap \left(\ol{F^1}\cap W_2+\ol{F^0}\cap W_0\right)=W_2; \\
I^{2,2}&=&F^2\cap W_4 \cap \left(\ol{F^2}\cap W_4+\ol{F^1}\cap W_2+\ol{F^0}\cap W_1\right)\\
&=&F^2\cap (\ol{F^2}+W_2)
\end{eqnarray*}
because $F^1\cap W_2=W_2=\ol{F^1}\cap W_2$ by Theorem \ref{thm-LMH}.(iii).(a). 
\end{proof}
Combing this decomposition with Theorem \ref{thm-LMH}.(ii), we know that the induced Hodge decomposition $W_3/W_2\cong I^{2,1}\oplus I^{1,2}\oplus I^{3,0}\oplus I^{0,3}$ corresponds to the Hodge decomposition on $H^3(\tilde{X})\cong H^3(X)$ under the isomorphism $W_3/W_2\cong H^3(\tilde{X})$. 

Let $\la \cdot, \cdot\ra$ denote the non-degenerate intersection form on $H:=H^3_{\lim}\cong H^3(Y_s)$ (with $s\in S^*$) which is defined over $\bQ$.
Fix a non-zero rational vector $e_{h+1}\in I^{1,1}=W_2$.  Denote 
\begin{equation}\label{eq-realV}
V=I^{3,0}\oplus I^{0,3}\oplus I^{2,1}\oplus I^{1,2}.  
\end{equation}
Then $V$ is a sub-vector space of $H$ of dimension $2h+2$ with a real structure. It satisfies 
\begin{equation}
V\oplus I^{1,1}=V\oplus W_2=W_3, \quad V\cong W_3/W_2\cong H^3(\tilde{X}). 
\end{equation} 
Note that $(V\oplus I^{1,1})^\perp=W_3^\perp=W_2$ has dimension 1 and already contains $I^{1,1}$. It follows that 
 the restriction $\left.\la \cdot, \cdot\ra\right|_{V}$ is non-degenerate.  Otherwise $\dim(W_3^\perp \cap W_3)\ge 2$. 
Choose a symplectic base $\{e_0, \dots, e_{h}, f_0, \dots, f_{h}\}$ for $(V, \left.\la\cdot, \cdot\ra\right|_V)$. Define $f_{h+1}\in H_\bQ$ to be the vector that satisfies:
\begin{equation}\label{eq-inner1}
\la e_{h+1}, f_{h+1}\ra=1, \quad \text{ and}\quad  \la V, f_{h+1}\ra=0=\la f_{h+1}, f_{h+1}\ra. 
\end{equation}
Then $f_{h+1}\not\in W_3$ and hence $f_{h+1}+W_3=W_4$. 
Moreover by Theorem \ref{thm-LMH}.(iv), $N(f_{h+1})=k\cdot e_{h+1}$ with $k\neq 0\in \bQ$.


\section{Proof of Theorem \ref{thm-HR}}

We continue to use the notation from above. In particular, we choose the basis vector $\{e_0, e_1, \dots, e_{h+1}; f_0, f_1, \dots, f_{h+1}\}$ of $H=H^3(Y_s;\bC)$ as in the end of last section.

Recall from \eqref{eq-HFsplitting} that $F^2=I^{3,0}\oplus I^{2,1}\oplus  I^{2,2}$. We choose a complex basis for $F^2$, by first 
choosing a nonzero vector $u_0\in I^{3,0}$ and a complex basis $\{u_1,\dots, u_h\}$ of $I^{2,1}$. 
Because $I^{2,2}+W_3=W_4$ (\eqref{eq-WFsplitting}) and $f_{h+1}+ W_3=W_4$, we know that
there is a nonzero vector in 
$I^{2,2}$ whose coefficient in front of $f_{h+1}$ is equal to $1$:
\begin{equation}\label{eq-uh+1}
u_{h+1}=\sum_{i=0}^{h+1} {a}_i e_i+\sum_{j=0}^{h} {b}_j f_j +f_{h+1}
\end{equation}
where with $a_i, b_j$ are possibly complex numbers. So we get a basis $\{u_0, u_1,\dots, u_h, u_{h+1}\}$ for $F^2$. 

For simplicity of notation, we denote $F^p=F^p_{\lim}$, and for $s\in S^*$, denote $F^p_s=F^p H^3(Y_s; \bC)$. 
Let us first prove the equality in \eqref{eq-opposed}: $F^2_s\cap \ol{F^2_s}=0$. 
There are two cases for the subspace $F^2$ of $H$:
\vskip 3mm
Case \textbf{(I)}: $F^2\cap \ol{F^2}=0$; Case \textbf{(II)}: $F^2\cap \ol{F^2} \neq 0$. 
\vskip 3mm

In case \textbf{(I)}, we immediately get $F^2_s\cap \ol{F^2_s}=0$ for $s$ sufficiently close to $0$. By the discussion in the introduction before Theorem \ref{thm-HR}, we also know that the Hodge filtration on $H^3(Y_s; \bC)$ induces a pure Hodge structure.  

Next we consider case \textbf{(II)}. Because $F^2\cap \ol{F^2}\subset F^2\cap (\ol{F^2}+W_2)=I^{2,2}$ and $\dim I^{2,2}=1$, we know that in fact $I^{2,2}=F^2\cap \ol{F^2}=\ol{I^{2,2}}$ is a one-dimensional subspace of $H$ with a real structure. 
Because $\cF^2$ is a holomorphic sub-vector bundle of $\mcH$, the basis $\{u_\alpha; \alpha=0, 1, \dots, h+1\}$ of $F^2=F^2_{\lim}$ chosen above extends to become a basis $\{u_\alpha(s); \alpha=0, 1, \dots, h+1\}$ of
$F^2_s$.  
For any $\alpha\in \{0,\dots, h+1\}$, we can write:
\begin{equation}\label{eq-decomp1}
u_\alpha(s)=\sum_{i=0}^h A_{\alpha,i}e_i+\sum_{i=0}^h B_{\alpha,i}f_i+C_{\alpha}e_{h+1}+D_{\alpha}f_{h+1}.
\end{equation}
For any $s\in S$, we will also use $s$ to denote its coordinates $(t, \zeta)$ with respect to the fixed isomorphism $S\cong \Delta^h\times \Delta$ and let $(t, z)\in $ denote the standard coordinates on $\widetilde{S^*}=\Delta^h\times \mathfrak{H}$. They are related by $\zeta=e^{2\pi \sqrt{-1} z}$ or $z=\frac{1}{2\pi \sqrt{-1}}\log \zeta$. The coefficients $A_{\alpha,i}, B_{\alpha,i}, C_\alpha, D_\alpha$ in \eqref{eq-decomp1} are holomorphic functions of $(t, \zeta)$. For convenience, we introduce the following notion.
\begin{defn}
With the above notation, we say that $Y_s$ is a $\delta$-small smoothing of $\bar{X}$ if $s\in S^*$ and $|s|<\delta$ where $|s|=(|t|^2+|\zeta|^2)^{1/2}$. 
\end{defn}
Now that precise meaning of Theorem \ref{thm-HR} is that we can find $\delta>0$ such that any $\delta$-small smoothing of $\bar{X}$ satisfies \eqref{eq-opposed} and \eqref{eq-HR3}. 
Note the relation
\begin{equation}\label{eq-izbd}
\iz= - \frac{1}{2\pi}\log |\zeta| \ge - \frac{1}{2\pi} \log |s|.
\end{equation}
So if $Y_s$ is $\delta$-small with $\delta\ll 1$, then $\iz>-\frac{1}{2\pi}\cdot \log \delta \gg 1 $. 

After restricting to $S^*$, we pullback these basis vectors to $\vphi^*\mcH|_{S^*}$ to get:
\begin{eqnarray}\label{eq-decomp2}
v_\alpha(t, z)&:=&\vphi^*u_\alpha(t, \zeta)=\sum_{i=0}^{h} \tilde{A}_{\alpha, i} e_i+\sum_{i=0}^h \tilde{B}_{\alpha, i} f_i+\tilde{C}_{\alpha} e_{h+1}+ \tilde{D}_{\alpha} f_{h+1}
\end{eqnarray}
where the coefficients are now holomorphic functions of $(t, z)$. 

By the discussion in the paragraph after Theorem \ref{thm-E1deg}, we have $v_\alpha(t, z)=e^{zN} u_\alpha(t, \zeta)=(1+z N) u_{\alpha}$ which gives us the identities: $\tilde{A}_{\alpha,i}(t, z)=A_{\alpha,i}(t, \zeta)$, $\tilde{B}_{\alpha,i}(t, z)=B_{\alpha,i}(t, \zeta)$, $\tilde{D}_\alpha(t, z)=D_\alpha(t, \zeta)$ and $\tilde{C}_{\alpha}(t, z)=z D_{\alpha}(t, \zeta)+C_\alpha(t, \zeta)$. 
So we have the identity (see \cite[3.3]{Fri20}):
\begin{eqnarray}\label{eq-decomp3}
v_\alpha(t, z)&=& \sum_{i=0}^{h} A_{\alpha,i} e_i+\sum_{i=0}^h B_{\alpha,i} f_i+(z D_{\alpha}+C_{\alpha}) e_{h+1}+ D_{\alpha} f_{h+1}\nonumber \\
&=:&v'_\alpha+\tilde{C}_{\alpha}e_{h+1}+D_{\alpha} f_{h+1},
\end{eqnarray}
where $v'_\alpha=\sum_{i=0}^h A_{\alpha,i}e_i+\sum_{i=0}^h B_{\alpha,i}f_i\in V=I^{3,0}\oplus I^{0,3}\oplus I^{2,1}\oplus I^{1,2}$ (see \eqref{eq-realV}). Note that $v_\alpha$ satisfies the property $v_\alpha(t, z+1)=(\mathrm{Id}+N) v_\alpha(t, z)$ such that the invariant section $e^{-zN}v_\alpha(t,z)$ descends to become the section $u_\alpha$. Because of such invariance property, in the following argument, we can assume that $z$ lies in a vertical strip of bounded width.

To prove that $F^2_s\cap \ol{F^2_s}\neq 0$, we need to show the non-vanishing:
\begin{equation}\label{eq-bigwedge}
\frac{\bigwedge_{\alpha=0}^{h+1} v_\alpha\wedge \ol{v_\alpha}}{e_{h+1}\wedge f_{h+1}\wedge \bigwedge_{i=0}^h e_i\wedge f_i
}\neq 0. 
\end{equation}

To get \eqref{eq-bigwedge}, we calculate:
\begin{eqnarray}\label{eq-vwedge}
v_\alpha\wedge \ol{v_\alpha}&=&\left(v'_\alpha+(z D_\alpha+C_\alpha) e_{h+1}+D_\alpha f_{h+1}\right)\wedge \left(\ol{v'_\alpha}+(\bar{z} \ol{D_\alpha}+\ol{C_\alpha}) e_{h+1}+\ol{D_\alpha} f_{h+1}\right)\nonumber \\
&=&v'_\alpha\wedge \ol{v'_\alpha}+
(\bar{z}\ol{D_\alpha}+\ol{C_\alpha}) v'_\alpha \wedge e_{h+1}+(z D_\alpha+C_\alpha)e_{h+1} \wedge \ol{v'_\alpha}\nonumber\\
&&+\ol{D_\alpha} v'_\alpha\wedge f_{h+1}+D_{\alpha} f_{h+1}\wedge \ol{v'_\alpha}\nonumber\\
&&+(z-\bar{z})|D_\alpha|^2 e_{h+1}\wedge f_{h+1}\nonumber\\
&&+(C_\alpha \ol{D_\alpha}-D_\alpha \ol{C_\alpha}) e_{h+1}\wedge f_{h+1}
\end{eqnarray}
According to the choice of basis vectors of $F^2$ at the beginning of this section, when $\alpha=i\in \{0,\dots, h\}$, $v'_i(0)=u_i \in I^{3,0}\oplus I^{2,1}\subset V$ which implies 
\begin{enumerate}[(i)]
\item $v_i=u_i+O(|s|)$ which means that the coefficients of $v_i-u_i$ are of order at most $O(|s|)$; 
\item $C_i(0)=0=D_i(0)$ which implies $C_i(s)=O(|s|)$ and $D_i(s)=O(|s|)$. 
\end{enumerate}
So for any $\alpha=i\in \{0, \dots, h\}$, we can then write:
\begin{equation}\label{eq-vwedge2}
v_i(t, z)\wedge \ol{v_i(t, z)}=u_i\wedge \ol{u_i}+\bar{z}\ol{D_i} v'_i \wedge e_{h+1}+z D_i e_{h+1}\wedge \ol{v'_i}+(z-\bar{z})|D_i|^2 e_{h+1}\wedge f_{h+1}
+O(|s|)
\end{equation}
where $O(|s|)$ consists of terms whose coefficients with respect to the induced basis of $\wedge^2 H$ (by the basis $\{e_\alpha, f_\alpha\}$) are of order at most $O(|s|)$.
On the other hand, when $\alpha=h+1$, then $u_{h+1}(0)=u_{h+1}$ from \eqref{eq-uh+1}, and the coefficients of $u_{h+1}(s)$ are all of order $O(1)$. We calculate by using \eqref{eq-vwedge} to get:
\begin{eqnarray}\label{eq-vwedge3}
v_{h+1}(t,z)\wedge \ol{v_{h+1}(t,z)}&=&(z-\bar{z})|D_{h+1}|^2 e_{h+1}\wedge f_{h+1}\nonumber \\
&&+\bar{z}\ol{D_{h+1}} v'_{h+1}\wedge e_{h+1}+z D_{h+1} e_{h+1}\wedge \ol{v'_{h+1}}+O(1), 
\end{eqnarray}
where $O(1)$ consists of terms whose coefficients are uniformly bounded for $s\in S$. 

Because in the top wedge product any basis vector can appear only once, 
we can use the expressions \eqref{eq-vwedge2} and \eqref{eq-vwedge3} to easily see that the left-hand-side of \eqref{eq-bigwedge} is given up to a nonzero constant by:
\begin{eqnarray}\label{eq-leading1}
&&(z-\bar{z}) |D_{h+1}|^2(1+O(|s|))+\bar{z}\ol{D_{h+1}} O(|s|)+z D_{h+1} O(|s|)\nonumber \\
&&\hskip 2cm +O(1) \cdot \bar{z} \ol{D_i}+O(1)\cdot z D_i+O(1) (z-\bar{z})|D_i|^2+O(1)\nonumber\\
&=& 2\sqrt{-1} \iz\cdot \left[1+O(|s|)\right]+O(1)
\end{eqnarray} 
where we used $D_{h+1}=1+O(|s|)$, $D_{i}=O(|s|)$ and hence $|z D_i|=\iz\cdot O(|s|)$ (recall that we can assume that $z$ lies in a vertical strip of bounded width in $\mathfrak{H}$). So, by also taking \eqref{eq-izbd} into account, we conclude that there exists $\delta>0$ such that that last quantity in \eqref{eq-leading1} (which is a nonzero multiple of the left-hand-side of \eqref{eq-bigwedge}) is indeed non-zero as long as $|s|<\delta$. 

\begin{rem}\label{rem-choice}
In \cite{Fri20}, Friedman calculated a similar wedge product for a different basis of $F^2$, which is obtained by differentiating a holomorphic frame $F^3$ with respect to the Gauss-Manin connection. This choice of basis depends on a special property of differential of the period map in the Calabi-Yau case (see \cite[2.7]{Fri20}). Moreover the coefficients in his basis are only meromorphic. 

Our choice of basis, based on Deligne's splitting, is more direct and does not use the special property of the period map in the Calabi-Yau case. Moreover the coefficients are holomorphic. 
The calculation above shows that this simpler choice of basis actually works better for the main purpose. See also Remark \ref{rem-1dim}. 
\end{rem}

Next we prove the Hodge-Riemann bilinear relation \eqref{eq-HR3}. 
For any $s\in S$, set $H_s^{2,1}:=F_s^2\cap \ol{F_s^1}$. 
Because the Hodge-de Rham spectral sequence degenerates at the $E_1$-page (by Theorem \ref{thm-E1deg}.(iii)) when $s\in S^*$ is sufficiently small, we know that
\begin{equation*}
H^1(Y_s, \Omega_{Y_s}^2)\cong F^2_s/F^3_s. 
\end{equation*}
We get $\dim H^1(Y_s, \Omega^2_{Y_s})=\dim F_s^2-\dim F_s^3=(h+2)-1=h+1$. 
Because we know that $F_s^3\oplus \ol{F_s^1}=H_s=H^3(Y_s)$, the natural map 
\begin{equation}
H_s^{2,1}=F_s^2\cap \ol{F_s^1}\longrightarrow F_s^2/F_s^3
\end{equation}
is an isomorphism. In particular, $H^{2,1}_s$ has constant rank $h+1$ for any $s\in S^*$. Because the dimension of $H^{2,1}_s$ is upper semicontinuous, we get for $s\in S\setminus S^*=D$, 
\begin{equation}\label{eq-dimup}
\dim H_s^{2,1}\ge h+1.
\end{equation}

On the other hand, we can determine the dimension of $H^{2,1}_0=F^2\cap \ol{F^1}=F^2_{\lim}\cap \ol{F_{\lim}^1}$:
\begin{lem}\label{lem-H12}
The intersection $H^{2,1}_0=F^2\cap \ol{F^1}$ has dimension $h+1$. 
\end{lem}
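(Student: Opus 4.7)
The plan is to use Deligne's splitting (Theorem \ref{thm-splitting}) to identify $F^2\cap\overline{F^1}$ directly. By \eqref{eq-HFsplitting}, $F^2=I^{3,0}\oplus I^{2,1}\oplus I^{2,2}$ is of dimension $h+2$. The key extra ingredient is the standard refinement that $\overline{I^{p,q}}\equiv I^{q,p}$ modulo $\bigoplus_{r<q,\,s<p}I^{r,s}$. In our situation only the six bidegrees $(3,0),(2,1),(2,2),(1,2),(1,1),(0,3)$ carry non-zero $I^{p,q}$, and an immediate case-check gives $\overline{I^{p,q}}=I^{q,p}$ for every pair except $(2,2)$, where $\overline{I^{2,2}}$ is only known to be a $1$-dimensional subspace of $I^{2,2}\oplus I^{1,1}$.

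From this I would conclude that $F^2\cap\overline{F^1}=I^{2,1}\oplus I^{2,2}$. For the inclusion ``$\supseteq$'': one has $I^{2,1}=\overline{I^{1,2}}\subseteq\overline{F^1}$ because $I^{1,2}\subseteq F^1$; one has $I^{1,1}=\overline{I^{1,1}}\subseteq\overline{F^1}$ because $I^{1,1}=W_2\subseteq F^1$; and $\overline{I^{2,2}}\subseteq\overline{F^1}$ directly. The line $\overline{I^{2,2}}$ cannot sit inside $I^{1,1}$, since conjugating again would force $I^{2,2}\subseteq I^{1,1}$; hence $\overline{I^{2,2}}+I^{1,1}=I^{2,2}\oplus I^{1,1}$, which yields $I^{2,2}\subseteq\overline{F^1}$. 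For the inclusion ``$\subseteq$'': each generator $\overline{I^{p,q}}$ of $\overline{F^1}$ has vanishing projection onto $I^{3,0}$ by the rule above (the only candidate $(q,p)=(3,0)$ would require $(p,q)=(0,3)$, which is not in $F^1$), so any element of $F^2$ lying in $\overline{F^1}$ must have trivial $I^{3,0}$ component and therefore lie in $I^{2,1}\oplus I^{2,2}$. Combining the two inclusions gives $\dim H^{2,1}_0=h+1$.

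I do not see any substantive obstacle: the lemma drops out as soon as one unpacks Deligne's splitting together with the description of the limiting MHS from Theorem \ref{thm-LMH}. The one delicate point is that the limiting MHS need not be $\mathbb{R}$-split, so $\overline{I^{2,2}}$ may genuinely differ from $I^{2,2}$; this is what makes the inclusion $I^{2,2}\subseteq\overline{F^1}$ non-automatic and forces the small detour through $I^{1,1}=W_2$.
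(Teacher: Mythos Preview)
Your argument is correct and is genuinely cleaner than the paper's. The paper proceeds by splitting into the two cases $F^2\cap\ol{F^2}=0$ and $F^2\cap\ol{F^2}\neq 0$; in the second case it observes $I^{2,2}=\ol{I^{2,2}}$ and immediately gets $H^{2,1}_0=I^{2,1}\oplus I^{2,2}$, while in the first case it carries out an explicit coefficient comparison (equations \eqref{eq-inter1}--\eqref{eq-inter2}) to exhibit a vector $u^*_{h+1}=\kappa_0 u_0+u_{h+1}$ completing $I^{2,1}$ to a basis of $H^{2,1}_0$. Your use of the standard congruence $\ol{I^{p,q}}\equiv I^{q,p}\pmod{\bigoplus_{r<q,\,s<p}I^{r,s}}$ handles both cases at once and yields the uniform identification $F^2\cap\ol{F^1}=I^{2,1}\oplus I^{2,2}$. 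In fact your argument implicitly shows that the paper's coefficient $\kappa_0=b'_0-a'_0$ must vanish (the constraint $u_{h+1}\in I^{2,2}\subset \ol{F^2}+W_2$ forces $a'_i=b'_i$), so that $u^*_{h+1}=u_{h+1}$ already in case~\textbf{(I)}. The only thing the paper's longer computation buys is the explicit description \eqref{eq-u*h+1} of the distinguished basis vector used downstream; but since that vector is just the generator of $I^{2,2}$, your identification supplies the same information with less work.
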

\begin{proof}
Consider the two cases \textbf{(I)} and \textbf{(II)} as before. 
In case \textbf{(II)}, $I^{2,2}=\ol{I^{2,2}}$ and by \eqref{eq-HFsplitting} we get:
$H^{2,1}_0=I^{2,1}\oplus I^{2,2}$ which has dimension $h+1$.  

Let us consider the case \textbf{(I)}. \eqref{eq-HFsplitting} tell us that any element $u\in F^2$ has a representation:
\begin{equation}\label{eq-inter1}
u=\kappa_0 u_0 +\sum_{i=1}^h  \kappa_i u_i +\epsilon  u_{h+1}\in I^{3,0}\oplus I^{2,1}\oplus I^{2,2}. 
\end{equation}
On the other hand by \eqref{eq-HFsplitting} this lies in $\ol{F^1}$ if there exists a decomposition:
\begin{eqnarray}\label{eq-inter2}
u&=&\sum_{i=1}^h \kappa'_i u_i+\theta'_0 \ol{u_0}+\sum_{i=1}^h \theta'_i \ol{u_i}+ \lambda' e_{h+1}+\epsilon' \ol{u_{h+1}}\\
&&\hskip 2cm \in I^{2,1}\oplus I^{0,3}\oplus I^{1,2}\oplus I^{1,1}\oplus \ol{I^{2,2}}. \nonumber 
\end{eqnarray}  

If $\epsilon=0$, then it is easy to see that $\epsilon'=\kappa_0=\theta'_0=\theta'_i=\lambda'=0$ and $\kappa_i=\kappa'_i$ so that $u=\sum_{i=1}^h \kappa_i {u_i} \in I^{2,1}$. 

Let us assume that $\epsilon\neq 0$. By rescaling, we can assume that $\epsilon=1$. We first write $u_{h+1}$ from \eqref{eq-uh+1} into a different representation:
\begin{equation*}
u_{h+1}=\sum_{i=0}^{h} a'_i u_i+\sum_{i=0}^h \ol{b'_i} \ol{u}_i + c' e_{h+1} + f_{h+1}. 
\end{equation*}
Comparing the coefficients of \eqref{eq-inter1}-\eqref{eq-inter2} give us the following system of equations:
\begin{equation*}
\epsilon'=\epsilon=1, \quad 
\kappa_0+a'_0={b'_0}, \quad \kappa_i+a'_i=\kappa'_i+b'_i, \quad \ol{b'_0}=\theta'_0+\ol{a'_0}, \quad
\ol{b'_i}=\theta'_i+\ol{a'_i}, \quad c'=\lambda'+\ol{c'},
\end{equation*}
which means that the following coefficients are uniquely determined:
\begin{equation*}
\kappa_0={b'_0}-{a'_0}, \quad \theta'_0=\ol{b'_0}-\ol{a'_0}, \quad \theta'_i=\ol{b'_i}-\ol{a'_i}, \quad \lambda'=c'-\ol{c'}
\end{equation*}
and $\kappa'_i=\kappa_i+a'_i-b'_i$ while $\{\kappa_i; i=1, \dots, h\}$ are free variables. 
Set 
\begin{equation}\label{eq-u*h+1}
u^*_{h+1}=\kappa_0 u_0+u_{h+1}={b'_0}u_0+\sum_{j=1}^ha'_j u_j+\sum_{i=0}^h \ol{b'_i} \ol{u_i}+c' e_{h+1}+f_{h+1}.
\end{equation} 
Then we get:
\begin{equation}
u=u^*_{h+1}+\sum_{i=0}^h \kappa_i {u_i}\in u^*_{h+1}+I^{2,1}.
\end{equation}
So conclude that in case \textbf{(I)}, $H^{2,1}_0=\bC u^*_{h+1}+I^{2,1}$ again has dimension $h+1$. 

\end{proof}
  
This lemma tells us that $\dim H_0^{2,1}=h+1=3h+5-\dim (F^1_s+\ol{F^2_s})$ which implies that 
\begin{equation}\label{eq-dimdown}
\dim H_s^{2,1}=(3h+5)-\dim (F^1_s+\ol{F^2_s}) \le h+1
\end{equation} 
for $s$ sufficiently small. 
Combining this with \eqref{eq-dimup} we get $\dim H^{2,1}_s\equiv h+1$ for any $s\in S$ sufficiently small.  

So after possible shrinking, we conclude that $\{H^{2,1}_s; s\in S\}$ form a smooth sub-vector bundle $\mcH^{2,1}$ of $\mcH$ by the following easy lemma, whose proof we leave to the reader. \footnote{There is a proof in \href{https://mathoverflow.net/questions/85407/intersection-of-subvector-bundles}{https://mathoverflow.net/questions/85407/intersection-of-subvector-bundles}.}

\begin{lem}
Let $\mcH\cong S\times \bC^d$ be a trivial vector bundle over $S=\Delta^h\times \Delta$. Let $\mcH'$ and $\mcH''$ be two smooth  sub-vector bundle of $\mcH$ of ranks $d_1$ and $d_2$ respectively. If $\dim (\mcH'_s \cap \mcH''_s)$ is constant for any $s\in S$, then $\mcH'_s\cap \mcH''_s$ is a smooth sub-vector bundle of $\mcH$ of rank $\dim (\mcH'_s\cap \mcH''_s)$.  
\end{lem}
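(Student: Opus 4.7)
The plan is to reduce the statement to the constant rank theorem for smooth vector bundle homomorphisms. First I would introduce the smooth bundle map $\Phi\colon \mcH'\oplus \mcH''\to \mcH$ defined fiberwise by $\Phi(u,v)=u-v$. At each $s\in S$, the kernel $\ker \Phi_s=\{(w,w):w\in \mcH'_s\cap\mcH''_s\}$ is naturally isomorphic, via the first projection, to the intersection $\mcH'_s\cap \mcH''_s$. The hypothesis that $\dim(\mcH'_s\cap \mcH''_s)$ is the constant $k$ thus translates into the statement that $\Phi$ has constant rank $d_1+d_2-k$ over all of $S$.

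Next I would invoke the standard constant rank theorem for smooth bundle maps: a smooth bundle homomorphism of constant rank admits, in a neighborhood of every point, smooth local frames of source and target bundles in which it assumes a block projection form. This is a routine consequence of the implicit function theorem applied to a local matrix representation of $\Phi$ after choosing smooth local frames of $\mcH'$, $\mcH''$ and $\mcH$. From this normal form it follows that $\ker\Phi$ is a smooth subbundle of $\mcH'\oplus \mcH''$ of rank $k$.

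Finally, I would restrict the first projection $\pi_1\colon \mcH'\oplus\mcH''\to \mcH'$ to $\ker\Phi$. The resulting smooth bundle map is a fiberwise linear isomorphism onto $\mcH'_s\cap \mcH''_s\subset \mcH'_s$, so it realizes $\mcH'\cap \mcH''$ as a smooth subbundle of $\mcH'$, hence of $\mcH$, of rank $k$. The triviality assumption on $\mcH$ plays no essential role beyond ensuring the difference $u-v$ makes sense globally. No serious obstacle is expected here, since the lemma is a general differential-geometric fact and the reformulation via $\Phi$ reduces everything to the standard constant rank theorem; the only place where vigilance is required is in verifying that the hypothesis on fiberwise intersection dimension genuinely yields constant rank of $\Phi$, which as noted above is immediate.
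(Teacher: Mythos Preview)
Your proposal is correct. The paper does not actually supply a proof of this lemma: it declares it ``easy,'' leaves the proof to the reader, and points in a footnote to a MathOverflow thread. Your argument via the constant rank theorem applied to the bundle map $\Phi(u,v)=u-v$ on $\mcH'\oplus\mcH''$ is a clean and standard way to fill this gap, and is exactly the sort of verification one would expect here.
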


By the proof of Lemma \ref{lem-H12}, we can choose basis $\{u_1, \dots, u_h, u_{h+1}\}$ such that $\{u_1, \dots, u_h\}$ is a basis of $I^{2,1}$ and $u_{h+1}$ satisfies
$
u_{h+1}+I^{2,1}=H^{2,1}
$ and the coefficient of $u_{h+1}$ in front of $f_{h+1}$ is equal to 1. Indeed we can choose $u_{h+1}$ to be the same as the vector from \eqref{eq-uh+1} in case \textbf{(II)} and choose $u_{h+1}$ to be the vector $u^*_{h+1}$ from \eqref{eq-u*h+1} in case \textbf{(I)} (so that we change the notation from $u^*_{h+1}$ to $u_{h+1}$). 

Because $\mcH^{2,1}$ is a smooth vector bundle, we can then extend it to be become a basis $\{u_1(s), \dots, u_h(s), u_{h+1}(s)\}$ of $H_s^{2,1}=F_s^2\cap \ol{F_s^1}$ as long as $s$ sufficiently small. Each $u_i(s)$ is in general only smooth $H$-valued function of $(t, \zeta)$. 
For any $\alpha \in \{1, \dots, h+1\}$, we can still define $v_\alpha=\vphi^*u_\alpha$ and get decomposition as in \eqref{eq-decomp1}-\eqref{eq-decomp3}. But now the coefficients $A_{\alpha, i}, B_{\alpha, i}, C_\alpha, D_\alpha$ are only known to be smooth functions of $(t, \zeta)$. 

Set $Q(\cdot, \cdot)=-\sqrt{-1}\la \cdot, \cdot\ra$ and
$Q_{\alpha\beta}=Q(v_\alpha, \ol{v_\beta})=-\sqrt{-1}\la v_\alpha, \ol{v_\beta}\ra$ for $\alpha, \beta\in \{1,\dots, h+1\}$. We need to show that the matrix $\{Q_{\alpha\beta}\}$ is positive definite when $s\in S^*$ is sufficiently small (and hence $\iz$ is sufficiently big by \eqref{eq-izbd}).
We calculate:
\begin{eqnarray}\label{eq-Qmatrix}
Q_{\alpha\beta}&=&Q(v'_\alpha+ (z D_{\alpha}+C_\alpha) e_{h+1}+D_{\alpha} f_{h+1}, \ol{v'_\beta}+(\bar{z} \ol{D_{\alpha}}+\ol{C_\alpha}) e_{h+1}+\ol{D_{\beta}} f_{h+1})\nonumber \\
&=&Q(v'_\alpha, \ol{v'_\beta})-\sqrt{-1}(z-\bar{z}) D_{\alpha}\ol{D_{\beta}}+\sqrt{-1}(-C_\alpha \ol{D_{\beta}}+D_{\alpha}\ol{C_{\beta}})
\end{eqnarray} 
where we used the fact that $Q(V, e_{h+1})=0=Q(V, f_{h+1})$ and $Q(e_{h+1}, f_{h+1})=-\sqrt{-1}$, according to the choice of the symplectic basis $\{e_0, \dots, e_{h+1}; f_0, \dots, f_{h+1}\}$ (see \eqref{eq-inner1}). 

We consider $\{Q_{\alpha\beta}\}$ as the block matrix 
\begin{equation*}
\{Q_{\alpha\beta}\}=
\left(
\begin{array}{cc}
\{\hat{Q}_{ij}\} & \{Q_{i,h+1}\}\\
\{Q_{h+1,j}\} & Q_{h+1, h+1}
\end{array}
\right)
=:
\left(
\begin{array}{cc}
\hat{Q} & \phi \\
\phi^* & \rho
\end{array}
\right)
\end{equation*}
where for simplicity of notation, we have set $\rho=Q_{h+1,h+1}=Q(v_{h+1},v_{h+1})$ and $\phi=\{\phi_i\}:=\{Q_{i,h+1}=Q(v_i, \ol{v_{h+1}})\}_{1\le i\le h}$.

To prove the positive definiteness of $\{Q_{\alpha\beta}\}$, we first estimate $\rho=Q_{h+1,h+1}$ by using \eqref{eq-Qmatrix} to get:
\begin{equation*}
\rho=Q(v'_{h+1}, \ol{v'_{h+1}})-\sqrt{-1}(z-\bar{z})D_{h+1}\ol{D_{h+1}}+\sqrt{-1}(-C_{h+1}\ol{D_{h+1}}+D_{h+1}\ol{C_{h+1}})
\end{equation*}
Because all coefficients $A_{h+1,i}$, $B_{h+1,i}$, $C_{h+1}$ and $D_{h+1}$ as functions of $s=(t, \zeta)$ are of order at most $O(1)$, we get:
\begin{equation*}
Q(v'_{h+1}, \ol{v'_{h+1}})=O(1), \quad \left|\sqrt{-1}(C_{h+1}\ol{D_{h+1}}-D_{h+1}\ol{C_{h+1}})\right|=O(1).
\end{equation*}
Because $D_{h+1}(0)=1$ we have $D_{h+1}(s)=1+O(|s|)$ and hence:
\begin{equation}\label{eq-rhoest}
\rho=Q_{h+1,h+1}=2 \iz (1+O(|s|))+O(1) 
\end{equation}
which is positive as long as $|s|\ll 1$ (and hence $|\iz|\gg 1$ by \eqref{eq-izbd}).  

Before we continue, note that, because, for any $i\in \{1,\dots, h\}$ $u_i\in I^{2,1}\subset V_\bC=\mathrm{span}_\bC\{e_0,\dots, e_{h}, f_0, \dots, f_h\}$, we have the vanishing $C_i(0)=D_i(0)=0$ which implies 
\begin{equation}\label{eq-CiDi}
|C_i(s)|=O(|s|), \quad |D_i(s)|=O(|s|).
\end{equation} 
Next we do an elimination:
\begin{equation}
T Q T^*=\left(
\begin{array}{cc}
\hat{Q}-\rho^{-1}\phi\phi^*&0\\
0& \rho
\end{array}
\right) 
\quad \text{ with }\quad  
T=\left(
\begin{array}{cc}
\mathrm{Id}_{h}&  -\rho^{-1}\phi\\
0& 1
\end{array}
\right)
\end{equation}
where $\mathrm{Id}_h$ is the identity matrix of size $h\times h$.
Since we already know that $\rho>0$ as long as $|s|\ll 1$,
it is clear that, to prove the positive definiteness of $Q$, it suffices to prove that the sub-matrix $P:= \hat{Q}-\rho^{-1}\phi\phi^*$ is positive definite. 
For simplicity of notation, denote $y=\iz>0$. Let us calculate by using \eqref{eq-Qmatrix}:
\begin{eqnarray*}
\rho \cdot P_{ij}&=&\rho Q_{ij}-\phi_i \ol{\phi_j}=Q(v_{h+1}, \ol{v_{h+1}})Q(v_i, \ol{v_j})-Q(v_i, \ol{v_{h+1}})Q(v_{h+1}, \ol{v_j})\\
&=&\left(Q(v'_{h+1}, \ol{v'_{h+1}})+2 y |D_{h+1}|^2+\sqrt{-1}(-C_{h+1}\ol{D_{h+1}}+D_{h+1}\ol{C_{h+1}}\right)\\
&&\cdot \left(Q(v'_i, \ol{v'_j})+ 2y D_i\ol{D_j}+\sqrt{-1}(-C_i \ol{D_j}+D_i\ol{C_j})\right)\\
&&-\left(Q(v'_i, \ol{v'_{h+1}})+2y D_i \ol{D_{h+1}}+\sqrt{-1}(-C_i \ol{D_{h+1}}+D_i\ol{C_{h+1}})\right)\\
&&\hskip 5mm\cdot \left(Q(v'_{h+1}, \ol{v'_j})+2 y D_{h+1} \ol{D_j}+\sqrt{-1}(-C_{h+1}\ol{D_j}+D_{h+1}\ol{C_j})\right)\\
&=&2 y |D_{h+1}|^2Q(v'_i, \ol{v'_j})+2y |D_{h+1}|^2 \sqrt{-1} (-C_i\ol{D_j}+D_i\ol{C_j})\\
&&+2 y Q(v'_{h+1}, \ol{v'_{h+1}}) D_i \ol{D_j}+2 y D_i \ol{D_j}\sqrt{-1}(-C_{h+1}\ol{D_{h+1}}+D_{h+1}\ol{C_{h+1}})\\
&&-2y D_i \ol{D_{h+1}}Q(v'_{h+1}, \ol{v'_j})-2y D_i \ol{D_{h+1}}\sqrt{-1}(-C_{h+1}\ol{D_j}+D_{h+1}\ol{C_j})\\
&&-2y D_{h+1}\ol{D_j}Q(v'_i, \ol{v'_{h+1}})-2y D_{h+1}\ol{D_j}\sqrt{-1}(-C_i \ol{D_{h+1}}+D_i \ol{C_{h+1}})\\
&&+O(1).
\end{eqnarray*}

The points of this calculation are that the $y^2$-term get (surprisingly) cancelled, and there is a (leading) term $2 y|D_{h+1}|^2 Q(v'_i, \ol{v'_j})$ while the other $y$-terms all contain at least one factor from $\{C_i, D_i, C_j, D_j\}$ with $i,j\in \{1,\dots, h\}$ and hence can be written as $y\cdot O(|s|)$ thanks to \eqref{eq-CiDi}. 
So we can write
\begin{eqnarray}
P_{ij}&=&\rho^{-1}\cdot \rho P_{ij}=\frac{1}{2 y (1+O(s))+O(1)}\left(2y |D_{h+1}|^2 Q(v'_i, \ol{v'_j})+y\cdot O(|s|)+O(1)\right)\nonumber \\
&=&|D_{h+1}|^2 Q(v'_i, \ol{v'_j})+O(|s|)+O(y^{-1}).
\end{eqnarray}
Because both terms $O(|s|)$ and $O(y^{-1})$ are negligible if $|s|\ll 1$ is sufficiently small (see \eqref{eq-izbd}), to prove that $\{P_{ij}\}$ is positive definite, we just need to prove that $\{Q(v'_i, \ol{v'_j})\}$ is uniformly positive definite by which we mean that it is positive definite and is bounded from below by $\epsilon\cdot \mathrm{Id}_h$ for some $\epsilon>0$ independent of $s$. 
\begin{rem}\label{rem-1dim}
If $\dim S=1$ so that $s=\zeta$ and $y=\iz= -\frac{1}{2\pi}\log |s|$, then the calculations/estimates can be greatly simplified (as shown in the first arXiv-version of this paper). 
\end{rem}
Because $v'_i$ are small perturbations of $u_i$ (see the property (i) after \eqref{eq-vwedge}), we see that $\{Q(v'_i, \ol{v'_j})\}$ is a small perturbation of the $h\times h$-matrix 
$\{Q'_{ij}\}=\{Q(u_i, \ol{u_j})\}_{i,j\in \{1,\dots,h\}}$ which is just matrix of the restriction $Q|_{I^{2,1}}$ with respect to the basis $\{u_i; i=1,\dots, h\}$ of $I^{2,1}$. 
So it suffices to show that $\{Q'_{ij}\}$ is positive definite. 

To prove this positivity, we first remark that $Q=-\sqrt{-1}\la \cdot, \cdot\ra$ naturally induces a bilinear form $\bar{Q}$ on $W_3/W_2$. In other words, for any $[u], [v]\in W_3/W_2$, let $u, v\in W_3$ be their liftings and set:
\begin{equation}
\bar{Q}([u], [v])=Q(u, v)=-\sqrt{-1}\la u, v\ra.
\end{equation}
Because $W_2=W_3^\perp$, the right-hand-side does not depend on the liftings of $[u], [v]$. Now according to the splitting \eqref{eq-WFsplitting}, there is a canonical identification of $V=I^{3,0}\oplus I^{0,3}\oplus I^{2,1}\oplus I^{1,2}$ with $W_3/W_2$ which preserves the Hodge decomposition. 
It is immediate from the defintion that under this identification the bilinear form $Q_V(\cdot, \cdot):=-\sqrt{-1}\la\cdot, \cdot\ra|_V$ is nothing but the the induced bilinear form on $W_3/W_2$. Now the key is the following lemma.  

\begin{lem}\label{lem-transit}
Under the natural isomorphism of Hodge structures $W_3/W_2\cong H^3(\tilde{X})$, the bilinear form $\bar{Q}(\cdot,\cdot)$ is isomorphic to the bilinear form $-\sqrt{-1}\la \cdot, \cdot\ra_{\tilde{X}}$ on $H^3(\tilde{X})$. 
\end{lem}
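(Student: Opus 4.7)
The plan is to realize the pairing $\bar Q$ on $W_3/W_2$ as the ordinary intersection pairing on $\tilde X$ via two ingredients: (a) identify $W_3\subset H^3_{\lim}$ with $H^3(Y_0)$ by the specialization map of Theorem \ref{thm-LMH}(ii) and transport the cup product on $Y_s$ back to a pairing on $Y_0$; (b) exploit the fact that $Y_0=\tilde X\cup\coprod_i Q_i$ together with the vanishing $H^3(Q_i;\bQ)=0$. The $-\sqrt{-1}$ factor is present on both sides and can be ignored, so it suffices to match the underlying $\bQ$-alternating forms under the canonical isomorphism $W_3/W_2\cong H^3(\tilde X)\cong H^3(X)$.

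First I would argue that for $\tilde\alpha,\tilde\beta\in H^3(Y_0)$ one has
\begin{equation*}
\la sp(\tilde\alpha),sp(\tilde\beta)\ra_{Y_s}=\int_{Y_0}\tilde\alpha\cup\tilde\beta.
\end{equation*}
Conceptually, $sp$ is induced by $Y_s\hookrightarrow\cY$ composed with the retraction $\cY\to Y_0$, so for any lifts $\hat\alpha,\hat\beta\in H^3(\cY)$ of $\tilde\alpha,\tilde\beta$, the restrictions to $Y_s$ coincide with $sp(\tilde\alpha),sp(\tilde\beta)$; since $[Y_s]$ and $[Y_0]$ represent the same class in $H_6(\cY;\bQ)$ (the family is flat, so consecutive fibres are homologous), $\int_{Y_s}\hat\alpha\cup\hat\beta=\int_{Y_0}\hat\alpha\cup\hat\beta$. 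Equivalently, this is the perfect pairing $H^k_{\lim}\otimes H^{2n-k}(Y_0)\to\bQ$ built into the Clemens--Schmid exact sequence, or a consequence of Steenbrink's log de Rham formalism applied to $\Omega^\bullet_{\cY/S}(\log\cY_D)|_{Y_0}$.

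Next, using $[Y_0]=[\tilde X]+\sum_i[Q_i]$, the right-hand integral splits as a sum over components. Each $Q_i$ is a smooth quadric threefold in $\bP^4$ with $H^3(Q_i;\bQ)=0$, so the contributions from the $Q_i$'s vanish and only $\int_{\tilde X}\tilde\alpha|_{\tilde X}\cup\tilde\beta|_{\tilde X}$ survives. The restriction map $\tilde\alpha\mapsto\tilde\alpha|_{\tilde X}$ is precisely the surjection $H^3(Y_0)\twoheadrightarrow H^3(\tilde X)$ appearing in Theorem \ref{thm-LMH}(i), whose kernel is $W_2\cong\bQ(-1)$. This simultaneously re-confirms $W_2\subseteq W_3^\perp$ and identifies the form descended to $W_3/W_2\cong H^3(\tilde X)$ with the intersection form $\la\cdot,\cdot\ra_{\tilde X}$, proving the lemma.

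The main obstacle is justifying the cup-product/integration identity in the first step, because $Y_0$ is singular and $\int_{Y_0}(\cdot)\cup(\cdot)$ must be defined consistently with the MHS on $H^3(Y_0)$; this is standard for semistable degenerations but requires invoking either Clemens--Schmid or Steenbrink's log de Rham complex. Once this compatibility is set up, the rest of the argument is purely topological and uses only the fact that the smooth quadric threefolds $Q_i$ have vanishing $H^3$.
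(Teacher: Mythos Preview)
Your proposal is correct and follows the same overall strategy as the paper: both identify $W_3$ with $H^3(Y_0)$ via the retraction $\cY\to Y_0$, then use the vanishing of $H^3$ on the pieces other than $\tilde X$ to reduce the $Y_s$-pairing to the $\tilde X$-pairing. The execution differs in one instructive way. You go through the singular fibre, arguing that $[Y_s]$ and $[Y_0]$ are homologous in $\cY$ and then splitting $\int_{Y_0}=\int_{\tilde X}+\sum_i\int_{Q_i}$, which forces you to confront integration over the normal-crossing space $Y_0$ (the obstacle you flag). The paper sidesteps this entirely by working in the other direction: it starts from a closed $3$-form $\psi$ on $\tilde X$, uses $H^3(U)=0$ for a tubular neighbourhood $U$ of $\sqcup_i E_i$ to modify $\psi$ so that it vanishes near the exceptional divisors, and then extends by zero to $\cY$ using a retraction that is a product away from the singular locus. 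This produces explicit de Rham representatives on $\cY$ whose restrictions to $Y_s$ and to $\tilde X$ have literally the same integral by change of variables, so no singular cup product or Clemens--Schmid machinery is invoked. Your route is more conceptual and makes the role of $H^3(Q_i)=0$ transparent; the paper's route is more elementary and self-contained, trading the abstract compatibility statement for a short differential-form computation.
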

This lemma would complete the proof of Theorem \ref{thm-HR}. Indeed, with the identification $(V, Q_V(\cdot, \cdot))\cong (W_3/W_2, \bar{Q}(\cdot, \cdot))$, 
we know that the matrix $Q'=\{Q(u_i, \ol{u_j})\}=\{Q_V(u_i, \ol{u_j})\}$ is indeed positive definite by the second Hodge-Riemann bilinear relation for $\tilde{X}$. 

Finally we only need to prove Lemma \ref{lem-transit}.
\begin{proof}[Proof of Lemma \ref{lem-transit}]
We first note that the morphisms of mixed Hodge structures from Theorem \ref{thm-LMH}.(i)-(ii) are induced by natural geometric maps.
Indeed, the quotient morphism $H^3(Y_0)\rightarrow H^0(\tilde{X})$ is simply induced by the closed embedding $\tilde{X}\rightarrow Y_0$.
On the other hand, the injective morphism $\iota: H^3(Y_0)\cong H^3(\cY) \rightarrow H^3_{\lim}\cong H^3(Y_s)$ (with $s\in S^*$) is induced by the composition $Y_s \rightarrow \cY^*\rightarrow \cY$ and we used the fact that $Y_0\hookrightarrow \cY$ is a homotopy equivalence thus inducing an isomorphism $H^3(Y_0)\cong H^3(\cY)$. 

This description motivates our method to verify the statement of the lemma.
First we will lift any two elements $[u], [u'] \in H^3(\tilde{X})=W_3/W_2$ to elements in $H^3(\cY)\cong H^3(Y_0)=W_3$, and then restrict to get $u, u'\in H^3(Y_s)$. After this lifting, we will verify that the cup product of $u, u'$ on $Y_s$ agree with the cup product of $[u]$ and $[u']$ on $\tilde{X}$. 

This is indeed not difficult to achieve in by using de Rham cohomology. We first represent $[u]$ by a closed differential form $\psi$. Let $U$ be the disjoint union of open neighborhoods of exceptional divisors $E_i\cong \bP^1\times\bP^1$ in $\tilde{X}$. Then $U$ is homotopic to a disjoint union of $\bP^1\times\bP^1$. As a consequence, $H^3(U;\bC)=0$, and hence the closed form $\left.\psi\right|_{U}$ is exact. So there exists a smooth 2-form $\theta$ on $U$ such that $\psi|_{U}=d\theta$. Let $\eta$ be a cut-off function supported on $U$ and is identically equal to 1 on a smaller open neighborhood $U_1$ of $E_i$ such that $\ol{U_1}$ is a compact subset of $U$.
Then the differential form $\hat{\psi}=\psi-d (\eta\theta)$ vanishes on $U_1$ and represents $[u]$. 
We now want to extend $\hat{\psi}$ to get a closed two form $\hat{\Psi}$ on $\cY$ such that $\left.\hat{\Psi}\right|_{\tilde{X}_0}=\hat{\psi}$. 
For simplicity of notation, set $\ol{U_1}^c:=\tilde{X}\setminus \ol{U_1}$ so that $\hat{\psi}$ is supported on $\ol{U_1}^c$.  
By possibly shrinking $S$, we can then construct a deformation retraction $r: \cY\rightarrow Y_0$ such that $r^{-1}(\ol{U_1}^c)\cong \ol{U_1}^c\times S$ and the retraction is given by the projection. Then we define $\hat{\Psi}=r^*\hat{\psi}$ on $r^{-1}(\ol{U_1}^c)$ and is equal to 0 elsewhere. Then $\hat{\Psi}|_{Y_s}$ represents $u\in \iota(H^3(\cY))\subset H^3(Y_s)=H_{\lim}$. 

If $[u']$ is another element of $H^3(X; \bC)$, we can use the same process to get a lifting in $H^3(\cY; \bC)$ that is represented by a closed differential form $\hat{\Psi}'$ on $\cY$. So $u'\in \iota(H^3(\cY))\subset H^3(Y_s)=H_{\lim}$ is represented by $\hat{\Psi}'|_{Y_s}$.  

Then by the change of variable formula for integrals, we get the wanted identity:
\begin{equation*}
\la [u], [u']\ra_{\tilde{X}}= \int_{\tilde{X}} \psi\wedge \psi'=\int_{Y_s} \hat{\Psi}|_{Y_s} \wedge \hat{\Psi}'|_{Y_s}=\la u, u' \ra_{H_{\lim}}. 
\end{equation*}
\end{proof}

\vskip 3mm
\noindent
Department of Mathematics, Rutgers University, Piscataway, NJ 08854-8019.

\noindent
{\it E-mail address:} chi.li@rutgers.edu

\end{document}